\journal{arXiv}
\newtheorem{theorem}{Theorem}
\newtheorem{lemma}[theorem]{Lemma}
\newtheorem{corollary}[theorem]{Corollary}
\newtheorem{proposition}[theorem]{Proposition}
\newdefinition{definition}{Definition}
\newdefinition{hypothesis}{Hypothesis}
\newdefinition{remark}{Remark}
\newdefinition{example}{Example}
\newproof{proof}{Proof}
\DeclareMathOperator*{\interior}{int}
\DeclareMathOperator*{\mrad}{m_{\theta} }
\DeclareMathOperator*{\supp}{supp}
\DeclareMathOperator*{\Exp}{\bm{\mathbbm{E} } }
\DeclareMathOperator*{\prob}{\bm{\mathbbm{P} } }
\newcommand\thickbar[1]{\accentset{\rule{.4em}{.5pt}}{#1}}
\def\div{\bm{\nabla} \cdot}
\def\grad{\bm{\nabla}}
\def\del{\bm{\partial} }
\def\deled{\bm{\partial }_{e} }
\def\delsig{\bm{\partial }_{\sigma} }
\def\deli{\bm{\partial }_{i} }
\def\deledv{\bm{\partial }_{e, v} }
\def\wconv{\overset{w} \rightharpoonup}
\def\defining{\overset{\mathbf{def} } = }
\def\ind{\boldsymbol{\mathbbm{1}}}
\def\N{\bm{\mathbbm{N} } }
\def\R{\bm{\mathbbm{R}}}
\def\E{{\mathcal E} }
\def\G{{\mathcal G} }
\def\V{\mathcal{ V } }
\def\F{\thickbar{F} }
\def\Fi{\thickbar{F}_{i} }
\def\K{\thickbar{K} }
\def\h{\thickbar{h} }
\def\pn{p^{n}}
\def\pne{p^{n}_e}
\def\phone{\thickbar{p}^{\,n}_{1}}
\def\phtwo{\thickbar{p}^{\,n}_{2}}
\def\p{\thickbar{p} }
\def\plimi{\thickbar{p}_{i} }
\def\pe{p_e}
\def\psupe{p^{(e)}}
\def\hn{h^{n}}
\def\xin{\xi^{n}}
\begin{document}

\begin{frontmatter}

\title{Diffusion Processes Homogenization for Scale-Free Metric Networks }
\tnotetext[mytitlenote]{This material is based upon work supported by project HERMES 27798 from Universidad Nacional de Colombia,
Sede Medell\'in.}

\author{Fernando A Morales \sep $\&$ \sep Daniel E Restrepo 
}
\address{Escuela de Matem\'aticas
Universidad Nacional de Colombia, Sede Medell\'in \\
Calle 59 A No 63-20 - Bloque 43, of 106,
Medell\'in - Colombia}

\author[mymainaddress]{Fernando A Morales}

\cortext[mycorrespondingauthor]{Corresponding Author}
\ead{famoralesj@unal.edu.co}

%
%
\begin{abstract}
This work discusses the homogenization analysis for diffusion processes on scale-free metric graphs, using weak variational formulations. The oscillations of the diffusion coefficient along the edges of a metric graph induce internal singularities in the global system which, together with the high complexity of large networks constitute significant difficulties in the direct analysis of the problem. At the same time, these facts also suggest homogenization as a viable approach for modeling the global behavior of the problem. To that end, we study the asymptotic behavior of a sequence of boundary problems defined on a nested collection of metric graphs. This paper presents the weak variational formulation of the problems, the convergence analysis of the solutions and some numerical experiments.  
\end{abstract}
\begin{keyword}
Coupled PDE Systems, Homogenization, Graph Theory.
\MSC[2010] 35R02 \sep 35J50 \sep 74Qxx \sep 05C07 \sep 05C82
\end{keyword}

\end{frontmatter}


\section{Introduction}
%
%
A scale-free network is a large graph with power law degree distribution, i.e.,
$\prob[ \deg (v)= k ] \sim k^{-\gamma}$ where $\gamma$ is a fixed positive constant. Equivalently, the probability of finding a vertex of degree $k$, decays as a power-law of the degree value. Power-law distributed networks are of noticeable interest because they have been frequently observed in very different fields such as the World Wide Web, business networks, neuroscience, genetics, economics, etc. The current research on scale-free networks is mainly focused in three aspects: first, generation models (see \cite{BarabasiAlbert, Kumar}), second, solid evidence detection of networks with power-law degree distribution (see \cite{Wuchty, Faloutsos, Bortoluzzi, Tsaparas}). The third and final aspect studies the extent to which the power-law distribution relates with other structural properties of the network, such as self-organization (see \cite{Keller, LunLi}); this is subject of intense debate, see \cite{Keller} for a comprehensive survey on the matter.     
\vspace{10pt}

The present work studies scale-free networks from a very different perspective. Its main goal is to introduce a homogenization process on the network, aimed reduce the original order of complexity but preserving the essential features (see \textit{Figure} \ref{Fig Networks}). In this way, the ``homogenized" or ``upscaled" network is reliable for data analysis while, at the same time, involves lower computational costs and lower numerical instability. Additionally, the homogenization process derives a neater and more structural picture of the starting network since unnecessary complexity is replaced by the average asymptotic behavior of large data. The phenomenon known as ``The Aggregation Problem" in economics is an example of how this type of reasoning is implicitly applied in modeling the global behavior of large networks (see \cite{Kreps}). Usually, homogenization techniques require some assumptions of periodicity of the singularities or the coefficients of the system (see \cite{CioranescuDonato, Hornung}), in turn this case demands averaging hypotheses in the Ces\`aro sense. The resulting network has the desired features because of two characteristic properties of scale-free networks. On one hand, they resemble star-like graphs (see \cite{EstradaComplexNetworks}), on the other hand, they have a ``communication kernel" carrying most of the network traffic (see \cite{Kim}).  
\vspace{10pt}

This paper, for the sake of clarity, restricts the analysis to the asymptotic behavior of diffusion processes on stared metric graphs (see \textit{Definition} \ref{Def Metric Graph} and \textit{Figure} \ref{Fig Graphs Stages five and n} below). However, while most of the models in the preexistent literature are concerned with the strong forms of differential equations (see \cite{BerkolailoKuchment} for a general survey and \cite{JRamirez} for the stochastic modeling of advection-diffusion on networks), here we use the variational formulation approach, which is a very useful tool for upscaling analysis. More specifically, we introduce the the pseudo-discrete analogous of the classical stationary diffusion problem 
\begin{equation}\label{Pblm Dirchlet Classic Problem}
\begin{split}
-\div (K\nabla p) =f \quad \text{in} \; \Omega \, ,\\
  p = 0    \quad \text{on} \; \partial \Omega \, ,
  \end{split}
\end{equation}
where $K$ is the diffusion coefficient (see \textit{Definition} \ref{Def Strong Problem on Graphs} and \textit{Equation} \eqref{Pblm Weak Variational on Graphs} below). Due to the variational formulation it will be possible to attain a-priori estimates for a sequence of solutions, an asymptotic variational form of the problem and the computation of effective coefficients. Finally, from the technique, it will be clear how to apply the method to scale-free metric networks in general. 
\vspace{10pt}

Throughout the exposition the terms ``homogenized", ``upscaled" and ``averaged" have the same meaning and we use them indistinctly. The paper is organized as follows, in \textit{Section} \ref{Sec Preliminaries} the necessary background is introduced for $L^{2}$, $H^{1}$-type spaces on metric graphs as well as the strong form and the weak variational form, together with its well-posedness analysis. Also a quick review of equidistributed sequences and Weyl's Theorem is included to be used mostly in the numerical examples. In \textit{Section} \ref{Sec The Sequence of Problems} we introduce a geometric setting and a sequence of problems for its asymptotic analysis, a-priori estimates are presented and a type of convergence for the solutions. In \textit{Section} \ref{Sec Radial Approach}, under mild hypotheses of Ces\`aro convergence for the forcing terms, the existence and characterization of a limiting or homogenized problem are shown. Finally, \textit{Section} \ref{Sec Numerical Experiments} is reserved for the numerical examples and \textit{Section} \ref{Conclusions and Final Discussion} holds the conclusions.
\begin{figure}[h]\label{Fig Networks} 
        \centering
        \begin{subfigure}[Scale-Free Network. ]
                {\resizebox{5.3cm}{7.3cm}
                {\includegraphics{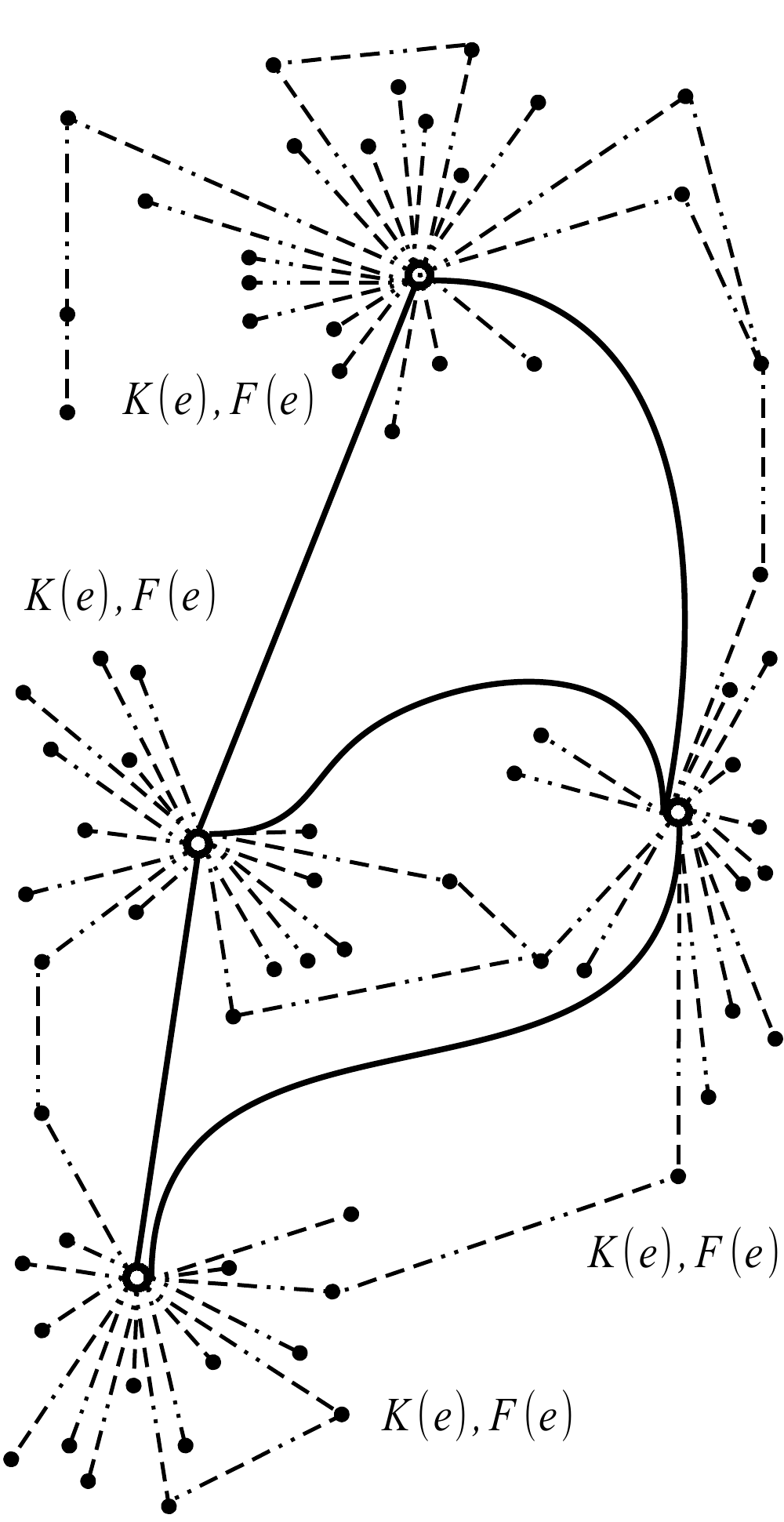} } }
        \end{subfigure} 
        \qquad
        ~ 
          \begin{subfigure}[Homogenized Network.]
                {\resizebox{5.3cm}{7.3cm}
                {\includegraphics{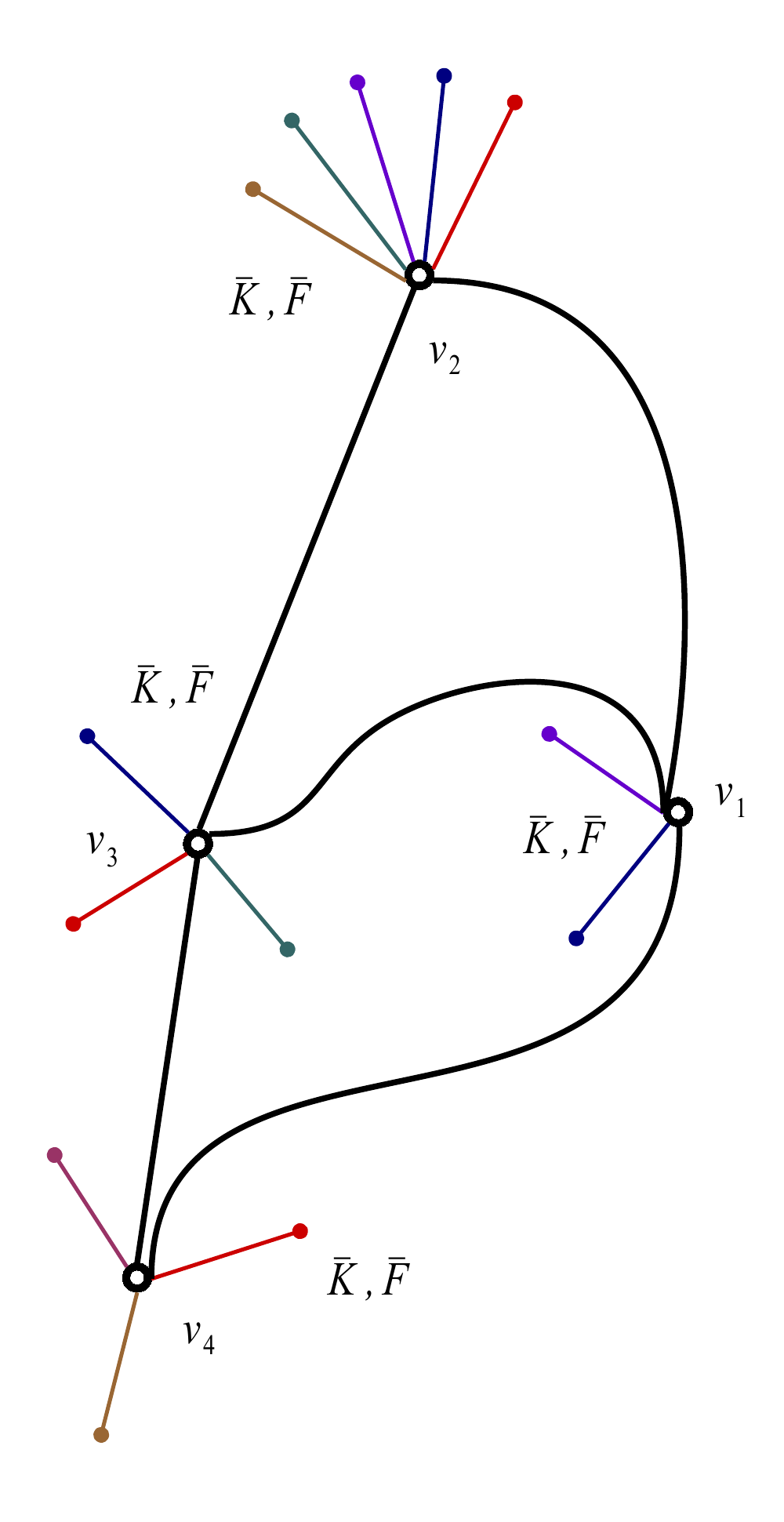} } }                
        \end{subfigure} 
\caption{Figure (a) depicts a scale-free network. Figure (b) depicts a homogenization of the original network.}
\end{figure}
%
%
%
%
\section{Preliminaries}\label{Sec Preliminaries}
%
%
%
%
%
%
\subsection{Metric Graphs and Function Spaces}
%
%
%
We begin this section recalling some facts for embeddings of graphs. 
\begin{definition}\label{Def planar graph}
A graph $G = (V, E)$ is said to be \textbf{embeddable} in $\R^{\! N}$ if it can be drawn in $\R^{\!N}$ so that its edges intersect only at their ends. A graph is said to be \textbf{planar} if it can be embedded in the plane. 
\end{definition}
It is a well-known fact that any simple graph can be embedded in $\R^{\!2}$ or $\R^{\!3}$ (depending whether it is planar or not) in a way that its edges are drawn with straight lines; see \cite{Fary1948} for planar graphs and \cite{BondyMurty} for non-planar graphs. In the following it will always be assumed that the graph is already embedded in $\R^{\!2}$ or $\R^{\!3}$.
%
%
%
%
\begin{definition}\label{Def Metric Graph}
Let $G = (V, E)$ be a graph embedded in $\R^{2}$ or $\R^{3}$, depending on the case.
\begin{enumerate}[(i)]
\item The graph $G$ is said to be a \textbf{metric graph} if each edge $e\in E$ is assigned a positive length $\ell_{e} \in (0, \infty)$.

\item The graph $G$ is said to be \textbf{locally finite} if $\deg(v) < + \infty$ for all $v\in V$.


\item If the graph $G$ is metric, the \textbf{boundary} of the graph is defined by the set of vertices of degree one. The set will also be denominated as the set of \textbf{boundary vertices} and denoted by
\begin{equation}\label{Def Boundary of a graph}
\partial V \defining \{ v\in V: \deg(v) = 1 \}.
\end{equation} 

\item Given a metric graph we define its natural domain by 
\begin{equation}\label{Def Domain of a graph}
\Omega_{G} \defining \bigcup_{e \, \in\, E } \interior (e) .
\end{equation}
\end{enumerate}
\end{definition}
%
%
%
%
\begin{definition}\label{Def Function Spaces on Metric Graphs}
Let $G = (V, E)$ be metric graph we define the following associated Hilbert spaces 
\begin{enumerate}[(i)]
\item The space of square integrable functions, or \emph{mass space} is defined by 
\begin{subequations}\label{Def square integrable functions}
\begin{equation}\label{Def square integrable functions direct sum}
L^{2}(G) \defining \bigoplus_{e \, \in \, E} L^{2}(e) , 
\end{equation}
endowed with its natural inner product
\begin{equation}\label{Def square integrable functions inner product}
\langle f, g\rangle_{L^{2}(G)} \defining \sum_{e \, \in \, E} \int_{e} f\, g .
\end{equation}
\end{subequations}

\item The \emph{energy space} of functions is defined by
\begin{subequations}\label{Def energy space functions}
\begin{multline}\label{Def energy space functions direct sum}
H^{1}(G) \defining \Big\{f\in \bigoplus_{e \, \in \, E} H^{1}(e) : 
 \lim_{\substack{x \, \rightarrow \, v\\
 x \, \in \, e} } f(x) = \lim_{\substack{x \, \rightarrow \, v\\
 x \, \in \, \sigma } }f(x) \, ,  \\
\text{for all vertices} \, v\in V \, \text{and for all edges }\; e, \sigma \;\text{ incident on}\; v 
\Big\} .
\end{multline}
In the sequel $f(v)\defining \lim\{ f(x): x \, \rightarrow \, v , \, x \, \in \, e \}$, with $e\in E$ any edge incident on $v$. We endow the space with its natural inner product
\begin{equation}\label{Def energy space functions inner product}
\langle f, g\rangle_{H^{1}(G)} \defining \sum_{e \, \in \, E} \int_{e} f\, g 
+ \sum_{e \, \in \, E} \int_{e} \deled f\, \deled g .
\end{equation}
\end{subequations}
Here $\deled$ denotes the derivative along the edge $e\in E$.

\item The space $H_{0}^{1}(G)$ is defined by

\begin{equation}\label{Eq null boundary conditions}
H_{0}^{1}(G) \defining \big\{f\in H^{1}(G) : 
f (v) =0\, , \; \text{for all} \; v\in \partial V\big\} ,
\end{equation}
endowed with the standard inner product \eqref{Def energy space functions inner product}.


\end{enumerate}
\end{definition}
\begin{remark}\label{Rem Identification of the Directional Derivative}
Let $G$ be a metric graph 
\begin{enumerate}[(i)]
\item Notice that the definition of $\deled$ is ambiguous in the expression \eqref{Def energy space functions inner product}, 
such ambiguity will cause no problems since the bilinear structure of the inner product is indifferent to the choice of direction
\begin{equation*}
(q,r)\mapsto 
 \deled q\, \deled  r=
\big(- \deled q \big) \, (-\deled r ). 
\end{equation*}
%


\item Whenever there is need to specify the direction of the derivate, we write $\deledv$ to indicate the direction pointing from the interior of the edge $e$ towards the vertex $v$ on one of its extremes. 


\item\label{Rem inner product equivalence} Notice that if the metric graph $G$ is connected, then the Poincar\'e inequality holds and the inner product 
\begin{equation}\label{Eq energy inner product}
(f, g)\mapsto \sum_{e\,\in \,E} \int_{e} \deled f\, \deled g ,
\end{equation}
is equivalent to the standard one \eqref{Def energy space functions inner product} in the space $H_{0}^{1}(G)$. 

\item Observe that the condition of agreement of a function $f\in H^{1}(G)$ on the vertices of the graph $G$ does not necessarily imply continuity as a function $f: \Omega_{G}\rightarrow \R$. For if the degree of a vertex $v\in V$ is infinite and the function is continuous on $v$, then it follows that the convergence $f(v)\defining \lim\{ f(x): x \, \rightarrow \, v , \, x \, \in \, e \}$ is uniform for all the edges $e$ incident on $v$. Such a condition can not be derived from the norm induced by the inner product \eqref{Def energy space functions inner product}, although the function $f\ind_{\interior(e)}$ is continuous for all $e\in E$.  
\end{enumerate}
\end{remark}
\begin{definition}\label{Def Increasing Graph Sequence}
Let $G_{n} = (V_{n}, E_{n})$ be a sequence of graphs. 
\begin{enumerate}[(i)]
\item The sequence $\{ G_{n}: n\in \N \}$ is said to be \textbf{increasing} if $V_{n}\subseteq V_{n+1}$ and $E_{n} \subseteq E_{n+1}$ for all $n\in \N$. 

\item Given an increasing sequence of graphs $\{ G_{n}: n\in \N \}$, we define the \textbf{limit graph} $G = (V, E)$ in the natural way i.e., 
\begin{subequations}
\begin{align*}
& V \defining \bigcup_{n\,\in\,\N} V_{n} &
& E \defining \bigcup_{n\,\in\,\N} E_{n} .
\end{align*} 
In analogy with monotone sequences of sets we adopt the notation
\begin{equation*}
G \defining \bigcup_{n\,\in\,\N} G_{n} .
\end{equation*} 
\end{subequations}
\end{enumerate}
\end{definition}
%
%
%
\subsection{The Strong and Weak Forms of the Stationary Diffusion Problem on Graphs}
%
%
\begin{definition}\label{Def Strong Problem on Graphs}
Let $G = (V, E)$ be a locally finite metric graph, $F\in L^{2}(G)$ and $h: V - \del V \rightarrow \R$, define the following diffusion problem
\begin{subequations}\label{Pblm Dirichlet on Graphs}
\begin{equation}\label{Def Strong Equation on Graphs}
\sum_{e \,\in \, E} -\deled \big(K\deled p \big) \ind_{e}
=\sum_{e \,\in \, E}  F\,\ind_{e} \quad \text{in} \; \Omega_{G}  .
\end{equation}
Where $K\in L^{\infty}(\Omega_{G})$ is a nonnegative diffusion coefficient. We endow the problem with normal stress continuity conditions
\begin{equation}\label{Def Strong Normal Stress on Graphs}
\lim_{\substack{x \, \rightarrow \, v\\
x \, \in \, e} } p(x) = p(v) \quad \text{for all } \; p\in V - \partial V ,
\end{equation}
and normal flux balance conditions
\begin{equation}\label{Def Strong Normal Flux on Graphs}
h (v) +
\sum_{\substack{e\,\in \,E\\ e\, \text{incident on} \, v } } 
\lim_{\substack{x \rightarrow v\\
x \, \in \, e} } \deledv \, p(x) =  0 \quad \text{for all } \; v\in V - \partial V .
\end{equation}
Here $\deledv$ denotes the derivative along the edge $e$ pointing away from the vertex $v$. Finally, we declare homogeneous Dirichlet boundary conditions
\begin{equation}\label{Def Strong Boundary Conditions on Graphs}
p(v)=0    \quad \text{for all } \; v \in  \partial V .
\end{equation}
\end{subequations}
\end{definition}
A weak variational formulation of this problem is given by
\begin{align}\label{Pblm Weak Variational on Graphs}
& p\in H^{1}_{0}(G):&
& \sum_{e \, \in \, E} \int_{e} K \deled p\, \deled q = 
\sum_{e \, \in \, E} \int_{e} F\,  q 
+ \sum_{v \, \in \, V - \partial V} h(v)\,  q(v) &
& \forall \, q\in H^{1}_{0}(G) .
\end{align}
For the sake of completeness we present the following standard result. 
\begin{proposition}\label{Th Well-Posedness Weak variational on Graphs}
Let $G  = (V, E)$ be a locally finite connected metric graph such that $\partial V \neq \emptyset$ and let $K\in L^{\infty}(\Omega_{G})$ be a diffusion coefficient such that $K(x) \geq c_{K} > 0$ almost everywhere in $\Omega_{G}$. Then the problem \eqref{Pblm Weak Variational on Graphs} is well-posed. 
\end{proposition}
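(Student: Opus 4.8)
The plan is to verify the hypotheses of the Lax--Milgram theorem on the Hilbert space $H^{1}_{0}(G)$. Because $G$ is connected with $\partial V\neq\emptyset$, the Poincar\'e inequality $\sum_{e\in E}\int_{e}|q|^{2}\leq C_{P}\sum_{e\in E}\int_{e}|\deled q|^{2}$ holds on $H^{1}_{0}(G)$, and, as noted in \textit{Remark} \ref{Rem Identification of the Directional Derivative}, the energy form \eqref{Eq energy inner product} is then an inner product on $H^{1}_{0}(G)$ equivalent to the standard one \eqref{Def energy space functions inner product}; write $\|q\|_{E}^{2}\defining\sum_{e\in E}\int_{e}|\deled q|^{2}$ for the associated norm. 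Set
\[
a(p,q)\defining\sum_{e\in E}\int_{e}K\,\deled p\,\deled q,\qquad
L(q)\defining\sum_{e\in E}\int_{e}F\,q+\sum_{v\in V-\partial V}h(v)\,q(v),
\]
so that \eqref{Pblm Weak Variational on Graphs} reads $a(p,q)=L(q)$ for all $q\in H^{1}_{0}(G)$.

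First I would establish that $a$ is bounded and coercive on $(H^{1}_{0}(G),\|\cdot\|_{E})$. Boundedness is immediate from $K\in L^{\infty}(\Omega_{G})$ and Cauchy--Schwarz, giving $|a(p,q)|\leq\|K\|_{L^{\infty}(\Omega_{G})}\|p\|_{E}\|q\|_{E}$. Coercivity is the structural point and uses the uniform lower bound on the diffusion coefficient: $a(q,q)\geq c_{K}\sum_{e\in E}\int_{e}|\deled q|^{2}=c_{K}\|q\|_{E}^{2}$ (equivalently, coercivity on the standard norm follows with constant $c_{K}/(1+C_{P})$).

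Next I would check that $L$ is a bounded linear functional. The bulk term is handled by Cauchy--Schwarz and then Poincar\'e: $\bigl|\sum_{e}\int_{e}F\,q\bigr|\leq\|F\|_{L^{2}(G)}\|q\|_{L^{2}(G)}\leq C_{P}^{1/2}\|F\|_{L^{2}(G)}\|q\|_{E}$. The vertex-load term requires the one-dimensional Sobolev embedding $H^{1}(e)\hookrightarrow C(\overline{e})$: for each $v\in V-\partial V$ and any edge $e$ incident on $v$ one gets $|q(v)|\leq C_{e}\|q\|_{H^{1}(e)}\leq C_{e}\|q\|_{H^{1}(G)}$, hence $\bigl|\sum_{v}h(v)q(v)\bigr|\leq\bigl(\sum_{v}|h(v)|\,C_{e(v)}\bigr)\|q\|_{H^{1}(G)}$, which is finite once $h$ is summable against these trace constants --- a condition that is vacuous when $V$ is finite, and so in particular for the star graphs treated later, and which is the integrability requirement implicitly carried by the data $h$ in general.

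With $a$ bounded and coercive and $L$ bounded, the Lax--Milgram theorem furnishes a unique $p\in H^{1}_{0}(G)$ solving \eqref{Pblm Weak Variational on Graphs}, together with the a-priori estimate $\|p\|_{E}\leq c_{K}^{-1}\|L\|_{(H^{1}_{0}(G))^{\ast}}$, which is continuous dependence on $(F,h)$; this is exactly well-posedness. I expect the only genuine obstacle to be the control of the vertex-load functional $q\mapsto\sum_{v}h(v)q(v)$ on $H^{1}_{0}(G)$: since the constant in $H^{1}(e)\hookrightarrow C(\overline{e})$ degenerates as $\ell_{e}\to0$, on an infinite graph this term cannot be bounded without a summability hypothesis on $h$, so that point must be isolated carefully; every other step is routine.
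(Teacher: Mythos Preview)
Your proposal is correct and follows the same route as the paper: verify the hypotheses of Lax--Milgram on $H^{1}_{0}(G)$, with coercivity coming from $K\geq c_{K}$ together with the Poincar\'e inequality (valid because $G$ is connected with $\partial V\neq\emptyset$, cf.\ \textit{Remark}~\ref{Rem Identification of the Directional Derivative}). The paper's proof is terser---it simply asserts that the right-hand side functionals are continuous---whereas you actually work through the vertex-load term via the embedding $H^{1}(e)\hookrightarrow C(\overline{e})$ and correctly flag that on an infinite graph this requires an implicit summability condition on $h$; the paper does not isolate this point, so your treatment is, if anything, more careful than the original.
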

\begin{proof}
Clearly the functionals on the right hand side of \textit{Problem} \eqref{Pblm Weak Variational on Graphs} are linear and continuous, as well as the bilinear form $b(p, q) \defining \sum_{e \, \in \, E}  \int_{e} K \deled p\, \deled q$ of the left hand side. Additionally, 
\begin{equation*}
\sum_{e \, \in \, E}  \int_{e} K \vert \deled p\vert^{2} \geq 
c_{K} \sum_{e \, \in \, E}  \int_{e}  \vert \deled p\vert^{2} \geq 
\tilde{c}  \sum_{e \, \in \, E}  \Vert  p\Vert_{H^{1}(e)} ^{2}
= \tilde{c} \,  \Vert p\Vert_{H^{1}(G)} ^{2} . 
\end{equation*}
The first inequality above holds due to the conditions on $K$. The second inequality hods due to the Dirichlet homogeneous boundary conditions and the connectedness of the graph $G$, which permits the Poincar\'e inequality on the space $H^{1}_{0}(G)$ as discussed in Remark \ref{Rem Identification of the Directional Derivative} \eqref{Rem inner product equivalence} above. Therefore, due to the Lax-Milgram Theorem, the Problem \eqref{Pblm Weak Variational on Graphs} is well-posed.
\qed
\end{proof}
%
%
%
\subsection{Equidistributed Sequences and Weyl's Theorem}
%
%
The brief review of equidistributed sequences and Weyl's theorem of this section will be applied, almost exclusively in the numerical examples below, see \textit{Section} \ref{Sec Numerical Experiments}. For a complete exposition on equidistributed sequences and Weyl's Theorem see \cite{SteinShakarchi}.
%
%
%
%
\begin{definition}\label{Def equidistributed sequences}
A sequence $\{\theta_n: n\in\N\}$ is called equidistributed on an interval $[a,b]$ if for each subinterval $[c,d]\subseteq [a,b]$ it holds that:
\begin{equation}
\lim\limits_{n\to \infty} \frac{ \#\{ i:\theta_i\in [c,d],1\leq i\leq n\}}{n}=\frac{d-c}{b-a} .
\end{equation}
\end{definition}
\begin{theorem}[Weyl's Theorem]\label{Th Weyl's Theorem}
	Let $\big\{\theta_n: n\in\N \big\}$ be a sequence on $[a,b]$, the following conditions are equivalent:
	\begin{enumerate}[(i)]
		\item The sequence $\{\theta_n: n\in\N\}$ is equidistributed in $[a,b]$.
		
		\item For every Riemann integrable function $f:[a,b]\to \bm{\mathbbm{C}}$ 
		\begin{equation*}
	\lim\limits_{n\to \infty}\frac{1}{n}\sum\limits_{i=1}^{n} f(\theta_i) 
	=\frac{1}{b-a}\int_{a}^{b}f(\theta)\, d \theta .
		\end{equation*}
	\end{enumerate}
\end{theorem}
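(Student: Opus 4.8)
The plan is to prove both implications, with the bulk of the work going into (i) $\Rightarrow$ (ii); the reverse implication (ii) $\Rightarrow$ (i) is then essentially an application of (ii) to a carefully chosen family of test functions.

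For (i) $\Rightarrow$ (ii), I would first verify the identity for the simplest class of functions and then bootstrap by density and approximation. Step one: observe that for the indicator function $f = \ind_{[c,d]}$ of a subinterval $[c,d]\subseteq[a,b]$, the Cesàro average $\frac1n\sum_{i=1}^n f(\theta_i)$ is precisely $\frac{1}{n}\#\{i : \theta_i\in[c,d],\, 1\le i\le n\}$, so the equidistribution hypothesis is exactly the statement that this tends to $\frac{d-c}{b-a} = \frac{1}{b-a}\int_a^b \ind_{[c,d]}$. Step two: by linearity of both the averaging operator $f\mapsto \lim_n \frac1n\sum_{i=1}^n f(\theta_i)$ (where the limit exists) and the integral, the desired identity holds for any finite linear combination of indicators of subintervals, i.e., for every step function on $[a,b]$. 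Step three: pass from step functions to an arbitrary Riemann integrable $f:[a,b]\to\bm{\mathbbm{C}}$. Splitting into real and imaginary parts, it suffices to treat real-valued $f$. Since $f$ is Riemann integrable, for every $\varepsilon>0$ there exist step functions $f_-\le f\le f_+$ with $\int_a^b (f_+ - f_-) < \varepsilon$. Then for each $n$,
\begin{equation*}
\frac1n\sum_{i=1}^n f_-(\theta_i) \le \frac1n\sum_{i=1}^n f(\theta_i) \le \frac1n\sum_{i=1}^n f_+(\theta_i),
\end{equation*}
and letting $n\to\infty$ using step two, the outer terms converge to $\frac{1}{b-a}\int_a^b f_\pm$, which are within $\frac{\varepsilon}{b-a}$ of $\frac{1}{b-a}\int_a^b f$. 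Hence $\limsup_n$ and $\liminf_n$ of the middle term both lie within $\frac{\varepsilon}{b-a}$ of $\frac{1}{b-a}\int_a^b f$; since $\varepsilon$ is arbitrary, the limit exists and equals $\frac{1}{b-a}\int_a^b f$.

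For (ii) $\Rightarrow$ (i), fix a subinterval $[c,d]\subseteq[a,b]$. The indicator $\ind_{[c,d]}$ is itself Riemann integrable (it is bounded with discontinuity set $\{c,d\}$ of measure zero), so applying (ii) directly with $f=\ind_{[c,d]}$ gives
\begin{equation*}
\lim_{n\to\infty}\frac{\#\{i:\theta_i\in[c,d],\,1\le i\le n\}}{n} = \lim_{n\to\infty}\frac1n\sum_{i=1}^n \ind_{[c,d]}(\theta_i) = \frac{1}{b-a}\int_a^b \ind_{[c,d]} = \frac{d-c}{b-a},
\end{equation*}
which is exactly the equidistribution property. (If one prefers to avoid invoking Riemann integrability of indicators, one can instead sandwich $\ind_{[c,d]}$ between continuous functions $g_-\le \ind_{[c,d]}\le g_+$ that are piecewise linear, with $\int(g_+-g_-)$ small, and run the same sandwiching argument as above using (ii) for the continuous functions $g_\pm$.)

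The only genuinely delicate point is the approximation step (step three) in the forward direction: one must be careful that the step-function approximants are chosen to trap $f$ both from above and below with small integral gap — this is where Riemann (as opposed to merely Lebesgue) integrability is used — and that the averaging limit is applied only to the step functions, for which existence of the limit is already known, rather than prematurely to $f$ itself. Everything else is linearity and a routine $\varepsilon$-squeeze.
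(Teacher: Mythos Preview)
Your argument is correct and is essentially the standard textbook proof. Note, however, that the paper does not give its own proof of this statement: it merely records the theorem in the preliminaries and refers the reader to Stein--Shakarchi for a complete treatment, so there is nothing in the paper to compare against beyond that reference.
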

%
%
%
\begin{definition}\label{Def angular average}
Let $\Omega = B(0, 1)\subseteq \R^{2}$ and let $f: \Omega\rightarrow \R$ be such that its restriction to every sphere $\partial B(0, \rho)$ with $0\leq \rho < 1$ is Riemann integrable. Then, we define its \textbf{angular average} by the average value of $f$ along the sphere $\partial (B(0, \rho))$, i.e.,
\begin{align}\label{Eq radial average}
& \mrad[f]: [0, 1)\rightarrow \R \, , &
& \mrad[f] (t)\defining \frac{1}{2 \pi}\int_{0}^{2\pi} f\big(t \cos \theta, \, t  \sin \theta) \big)\,d \theta .
\end{align}
%
%
\end{definition}
%
%
%
%
%
\section{The Sequence of Problems}\label{Sec The Sequence of Problems}
%
%
%
%
In this section we analyze the behavior of the solutions $\{\pn: n\in \N\}$ of a family of well-posed problems on an very particular increasing sequence of graphs $\{G_{n}: n\in \N \}$, depicted in \textit{Figure} \ref{Fig Graphs Stages five and n}. 
%
%
%
%
\subsection{Geometric Setting and the n-Stage Problem}
%
%
In the following we denote by $\Omega$, $S^{1}$ the unit disk and the unit sphere in $\R^{2}$ respectively. The function $F:\Omega_{G}\rightarrow \R$ is such that $F\ind_{\Omega_{n}} \in L^{2}(\Omega_{n})$ for all $n\in \N$, $\{\hn: n\in \N\} $ is a sequence of real numbers and the diffusion coefficient $K\in L^{\infty}(\Omega_{G})$ is such that $K(\cdot) \geq c_{K} > 0$ almost everywhere in $\Omega_{G}$.
\begin{definition}\label{Def radial equidistributed graph}
Let $\{v_{n}: n \geq 1 \}$ be an equidistributed sequence in $S^{1}$ and $v_{0}\defining 0 \in \R^{2}$. 
\begin{enumerate}[(i)]
\item For each $n\in \N$ define the graph $G_{n} = (V_{n}, E_{n})$ in the following way:
\begin{align}\label{Def n-stage vertices radial equidistributed graph}
& V_{n} \defining \{ v_{n}:0\leq i\leq n  \} , &
& E_{n} \defining \{v_{0}v_{i}: 1\leq i\leq n \}.
\end{align}
%

\item For the increasing sequence of graphs $\{G_{n}: n\in \N \}$ define the limit graph $G \defining \bigcup_{n\,\in\,\N} G_{n}$ as described in \textit{Definition} \ref{Def Increasing Graph Sequence}.

\item In the following we denote the natural domains corresponding to $G$, $G_{n}$ by $\Omega_{G}$ and $\Omega_{n}$ respectively. 

\item For any edge $e\in E$ we denote by $v_{e}$ its boundary vertex and $\theta_{e}\in [0, 2\pi]$ the direction of the edge. 

\item From now on, for each edge $e = v_{0} v_{e}$ and $f: e\rightarrow \R$ a function, it will be understood that its one-dimensional parametrization, is oriented from the central vertex $v_{0}$ to the boundary vertex $v_{e}$. Consequently the derivative $\deled$ equals $\del_{e, v_{e}}$. 

\item For any given function $f: \Omega_{G}\rightarrow \R$ (or $f: \Omega_{n}\rightarrow \R$) we denote by $f_{e}: (0,1)\rightarrow \R$, the real variable function $f_{e}(t) \defining (f\ind_{e})(t\cos \theta_{e}, t\sin \theta_{e})$ on the edges $e\in E$ (or $e\in E_{n}$ respectively).

\begin{figure}[h] 
        \centering
        \begin{subfigure}[Graph Stage $5$. ]
                {\resizebox{5.3cm}{5.3cm}
                {\includegraphics{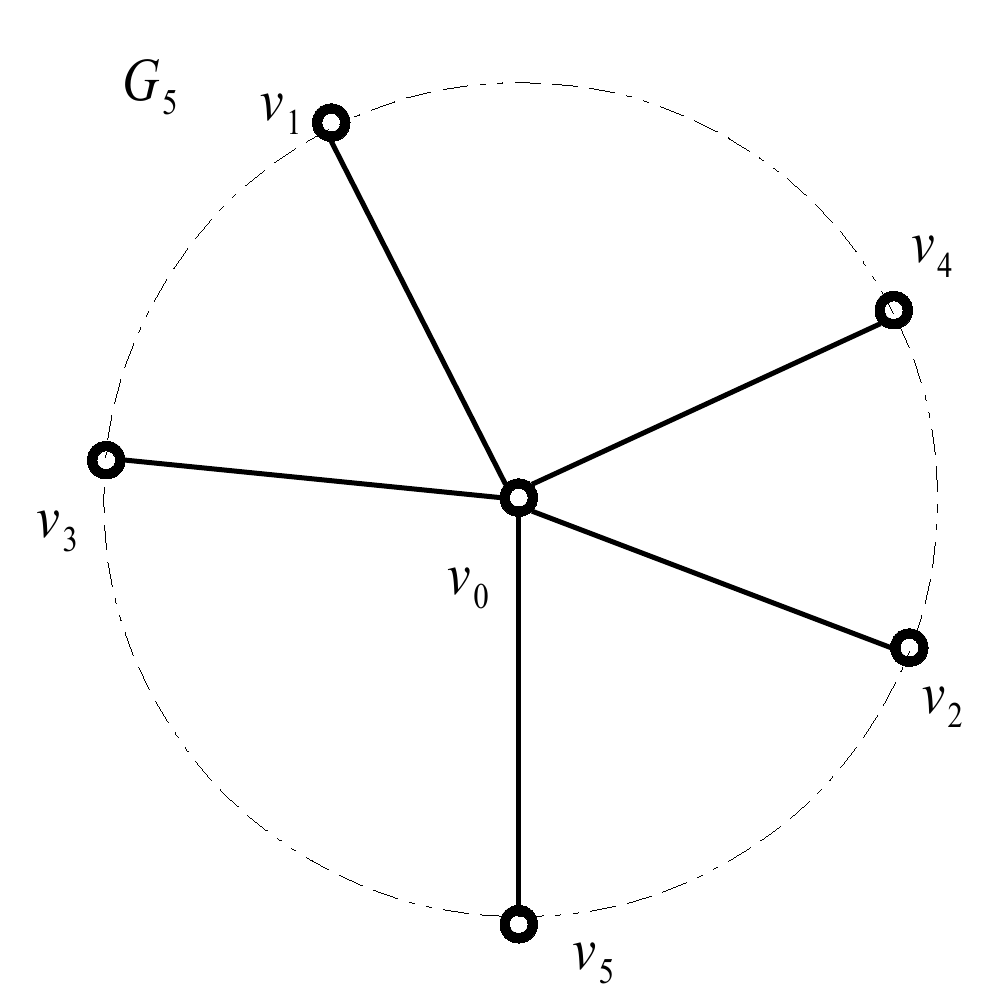} } }
        \end{subfigure} 
        \qquad
        ~ 
          \begin{subfigure}[Graph Stage n.]
                {\resizebox{5.3cm}{5.3cm}
                {\includegraphics{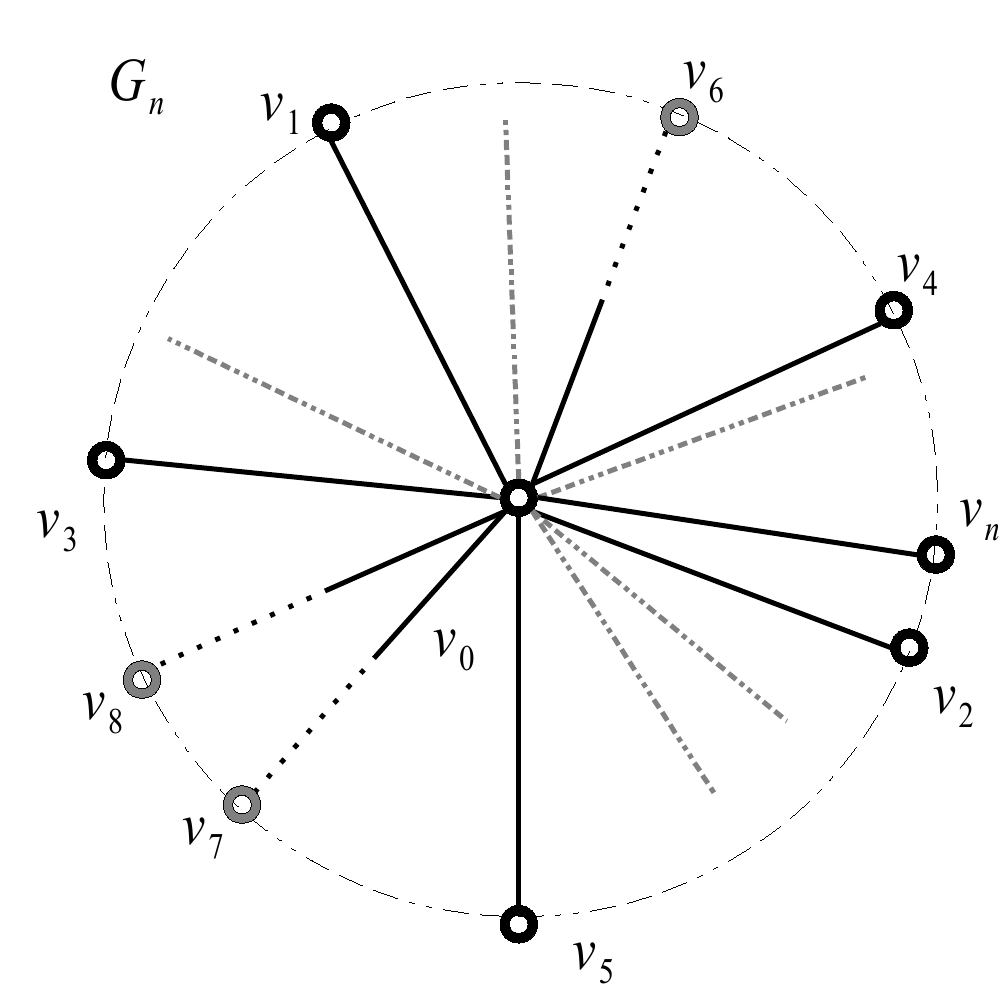} } }                
        \end{subfigure} 
%
\caption{Figure (a) depicts the stage $5$ of the graph $G$. Figure (b) depicts a more general stage $n$ of the graph $G$. \label{Fig Graphs Stages five and n} }
\end{figure}
\end{enumerate}
\end{definition}
\begin{remark}\label{Rem non-necessary equidistributed condition}
From the following analysis, it will be clear that it is not necessary to assume that the sequence of vertices $\{v_{n}: n\in \N \}$ of the graph is equidistributed or that the vertices are in $S^{1}$ or even that the graph is embedded in $\R^{2}$. We adopt these assumptions, mainly to facilitate a geometric visualization of the setting.
\end{remark}

From now on it will be assumed that $\{G_{n}: n\in \N \}$ is the increasing sequence of graphs, with $G$ its limit graph, as in the \textit{Definition} \ref{Def radial equidistributed graph} above.
Next, we define the family of problems to be studied, for each $n\in \N$ consider the well-posed problem
\begin{align}\label{Pblm n-Stage Weak Variational}
& \pn\in H^{1}_{0}(G_{n}):&
& \sum_{e \, \in \, E_{n}} \int_{e} K \deled \pn\, \deled q = 
\sum_{e \, \in \, E_{n}} \int_{e} F\,  q 
+ 
\hn \,  q(v_{0}) \,  ,&
& \forall \, q\in H^{1}_{0}(G_{n}) .
\end{align}
We are to analyze the asymptotic behavior of the sequence of solutions $\{\pn: n\in \N \}$. One of the main challenges is that the elements of the sequence are not defined on the same global space. The fact that $\pn(0)$ may not be zero makes impossible to extend this function to $H_{0}^{1}(G)$ directly, however it will play a central role in the asymptotic analysis of the problem.

\subsection{Estimates and Edgewise Convergence Statements}\label{Estimates and Edgewise Convergence Statements}
%
%
In this section we get estimates for the sequence of solutions, several steps have to be made as it is not direct to attain them. We start introducing conditions to be assumed from now on
\begin{hypothesis}\label{Hyp Forcing Terms and Permeability}
\begin{enumerate}[(i)]
\item The forcing term $F$ is defined in the whole domain, i.e. $F: \Omega_{G}\rightarrow \R$ and  $M \defining \sup_{e \, \in \,  E} \Vert F\Vert_{L^{2}(e)}< + \infty$.

\item The sequence $\big\{\dfrac{1}{n}\, \hn: n\in \N \big\}$ is bounded.

\item The permeability coefficient satisfies that  $K\in L^{\infty}(\Omega_{G})$, $\inf_{x \, \in \, \Omega_{G} } K(x) = c_{K} > 0$ and $K\ind_{e} = K(e)$ i.e., it is constant along each edge $e\in E$.
\end{enumerate}

\end{hypothesis}
\begin{remark}\label{Rem Scaling of the Normal Flux}
Notice that the \textit{Hypothesis} \ref{Hyp Forcing Terms and Permeability}-(ii) states that the balance of normal flux on the central vertex is of order $O(n)$ i.e., it scales with the number of incident edges. 
\end{remark} 
\begin{lemma}\label{Th Estimates on v_0 = 0}
Under the Hypothesis \ref{Hyp Forcing Terms and Permeability}, the following facts hold
\begin{enumerate}[(i)]
\item The sequence $\{\pn(0): n\in \N\} \subseteq \R$ is bounded.

\item Let $e\in E$ be an edge of the graph $G$ then, the sequence $\{\deled \pn(0): e\in E_{n}\} \subseteq \R$ is bounded. Moreover, there exists $M_{0}$ such that $\vert \deled \pn (0) \vert \leq M_{0}$ for all $e\in E$ and $n \in \N$ such that $e\in E_{n}$.

\item\label{Hyp Cesaro Means Convergence} Suppose that the sequences $\big\{ \dfrac{1}{n}\sum_{e \in  E_{n}} \int_{0}^{1} (t - 1) F_{e} (t) dt: n\in \N\big\}$, $\big\{\dfrac{1}{n}\, \hn: n\in \N\big\}$ and $\big\{ \dfrac{1}{n}\sum_{e \in E_{n}} K(e): n\in \N\big\}$ are convergent, then the following limits are satisfied
\begin{subequations}\label{Eq Middle Point Behavior}
\begin{equation}\label{Eq Middle Point Evaluation}
\lim_{n\rightarrow \infty} \pn(0) = 
\lim_{n\rightarrow \infty} \frac{\frac{1}{n}\sum_{e \in E_{n}} \int_{0}^{1} (t - 1) F_{e}(t) dt - \frac{1}{n} \hn}{\frac{1}{n}\sum_{e \in E_{n}} K(e)} .
\end{equation}
For any fixed edge $e\in E$ holds
\begin{equation}\label{Eq Middle Point Derivatives Evaluation}
\lim_{n\rightarrow \infty} K(e)\deled\pn(0) = L(e) \defining 
\int_{0}^{1} (t - 1) F_{e}(t)dt 
- K(e)\lim_{n\rightarrow \infty} \frac{\frac{1}{n}\sum_{\sigma \in E_{n}} \int_{0}^{1} (t - 1) F_{\sigma} (t) dt - \frac{1}{n} \hn }{\frac{1}{n}\sum_{\sigma \in E_{n}} K(\sigma)}\, .
\end{equation}
Moreover, the convergence is uniform in the following sense
\begin{equation}\label{Eq Middle Point Derivatives Uniform Convergence}
\forall \, \epsilon > 0 \, \; \exists \, N\in \N\; \text{such that, if}\;n>N \, \text{and }\, e\in E_{n} ,\; \text{then }\; 
\vert K(e) \, \deled \pn(0) - L(e)\vert < \epsilon\, . 
\end{equation}
\end{subequations}
\end{enumerate}
\end{lemma}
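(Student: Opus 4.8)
The plan is to first solve the $n$-stage problem \eqref{Pblm n-Stage Weak Variational} \emph{edgewise} by ODE methods, since on each ray $e\in E_{n}$ the equation decouples given the Dirichlet value $0$ at $v_{e}$ and the common value $\pn(v_{0})$ at the center. On an edge $e$ with $K\ind_{e}=K(e)$ constant and parametrization $t\in(0,1)$ oriented from $v_{0}$ to $v_{e}$, the strong form $-K(e)\,\pn_{e}{}'' = F_{e}$ with $\pn_{e}(1)=0$, $\pn_{e}(0)=\pn(0)$ can be integrated explicitly: $\pn_{e}(t)$ is an affine function of $\pn(0)$ plus a fixed particular solution built from $F_{e}$. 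In particular $K(e)\,\deled\pn_{e}(0) = \int_{0}^{1}(t-1)F_{e}(t)\,dt - K(e)\,\pn(0)$ (the constant being exactly what one gets from two integrations of $F_e$ against the Green's function of the interval), which already explains the shape of \eqref{Eq Middle Point Derivatives Evaluation}. Then I would impose the flux balance \eqref{Def Strong Normal Flux on Graphs} at $v_{0}$, i.e. $\hn + \sum_{e\in E_{n}}\deled\pn_{e}(0)=0$; summing the edgewise formula over $e\in E_{n}$ and solving for $\pn(0)$ gives the closed form
\begin{equation*}
\pn(0) = \frac{\sum_{e\in E_{n}}\int_{0}^{1}(t-1)F_{e}(t)\,dt - \hn}{\sum_{e\in E_{n}}K(e)} ,
\end{equation*}
which upon dividing numerator and denominator by $n$ is precisely \eqref{Eq Middle Point Evaluation}. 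Alternatively, to stay within the variational framework, one obtains the same identity by testing \eqref{Pblm n-Stage Weak Variational} with the piecewise-affine ``tent'' function $q$ equal to $1$ at $v_{0}$ and $0$ at every boundary vertex.

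For part (i), boundedness of $\{\pn(0)\}$: from the closed form, the denominator $\sum_{e\in E_{n}}K(e)\geq n\,c_{K}$ by Hypothesis \ref{Hyp Forcing Terms and Permeability}-(iii), while the numerator is bounded in absolute value by $\sum_{e\in E_{n}}\int_{0}^{1}|F_{e}| + |\hn| \leq n\,M + |\hn|$ using $\int_{0}^{1}|t-1||F_{e}(t)|\,dt\leq \|F\|_{L^{2}(e)}\leq M$ (Cauchy--Schwarz plus Hypothesis \ref{Hyp Forcing Terms and Permeability}-(i)); dividing by $n$ and using Hypothesis \ref{Hyp Forcing Terms and Permeability}-(ii) that $\{\tfrac1n\hn\}$ is bounded, we get $|\pn(0)|\leq (M + \sup_n\tfrac1n|\hn|)/c_{K}$, a bound uniform in $n$. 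For part (ii), the edgewise identity $K(e)\deled\pn(0) = \int_{0}^{1}(t-1)F_{e}(t)\,dt - K(e)\pn(0)$ gives $|\deled\pn(0)| \leq (M + \|K\|_{L^{\infty}}\sup_n|\pn(0)|)/c_{K} =: M_{0}$, uniform over all $e\in E$ and all $n$ with $e\in E_{n}$.

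For part (iii), the convergence statements are now immediate \emph{algebra of limits}: under the stated assumption that $\{\tfrac1n\sum_{e\in E_{n}}\int_{0}^{1}(t-1)F_{e}(t)\,dt\}$, $\{\tfrac1n\hn\}$ and $\{\tfrac1n\sum_{e\in E_{n}}K(e)\}$ converge, and since the last limit is $\geq c_{K}>0$, the quotient in \eqref{Eq Middle Point Evaluation} converges, giving $\lim_n\pn(0)$. Plugging this into the edgewise identity gives \eqref{Eq Middle Point Derivatives Evaluation} for each fixed $e$, with $L(e)$ as defined. Finally, uniformity \eqref{Eq Middle Point Derivatives Uniform Convergence}: write
\begin{equation*}
K(e)\deled\pn(0) - L(e) = -K(e)\bigl(\pn(0) - \textstyle\lim_m p^{m}(0)\bigr),
\end{equation*}
so $|K(e)\deled\pn(0) - L(e)| \leq \|K\|_{L^{\infty}}\,|\pn(0) - \lim_m p^{m}(0)|$, and the right-hand side tends to $0$ \emph{independently of} $e$; choosing $N$ so that this is $<\epsilon$ for $n>N$ works for every $e\in E_{n}$ simultaneously. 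I expect the only real subtlety to be the clean derivation of the edgewise representation — making sure the orientation convention of \textit{Definition} \ref{Def radial equidistributed graph}, the sign of $\deledv$ in the flux balance, and the particular-solution term $\int_0^1(t-1)F_e(t)\,dt$ all match up — after which parts (i)--(iii) are routine estimates and limit manipulations.
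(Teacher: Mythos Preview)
Your plan is correct and lands on exactly the two identities the paper uses: the closed form for $\pn(0)$ obtained by testing with the piecewise-affine ``tent'' $q_{e}(t)=t-1$, and the edgewise relation $K(e)\,\pn(0)=\int_{0}^{1}(t-1)F_{e}(t)\,dt-K(e)\,\deled\pn(0)$, from which (i), (ii), (iii) and the uniformity argument follow precisely as you outline. The only methodological difference is how that edgewise identity is reached: you integrate the ODE $-K(e)(\pne)''=F_{e}$ directly, while the paper stays inside the variational framework, testing \eqref{Pblm n-Stage Weak Variational} with the same tent, integrating by parts on every $\sigma\neq e$, and invoking the PDE and the vertex flux balance to collapse the sum to the single-edge relation \eqref{Eq Relation Function Derivative at the Center}. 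Your route is shorter and more transparent; the paper's route has the advantage of never leaving the weak setting, which matches the spirit of the rest of the argument.

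One small point to tidy: the Kirchhoff condition that is actually encoded in the weak form \eqref{Pblm Weak Variational on Graphs} is $\sum_{e\in E_{n}}K(e)\,\deled\pn(0)=\hn$ (with the $K$ factor), not $\hn+\sum_{e}\deled\pn(0)=0$ as you wrote; indeed the paper uses the former explicitly in its proof of (ii). Your closed form for $\pn(0)$ already has $\sum_{e}K(e)$ in the denominator, so you are implicitly using the correct version, but make the flux balance consistent with it when you write the details.
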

\begin{proof}
\begin{enumerate}[(i)]
\item Let $q\in H_{0}^{1}(G_{n})$ be the function such that $q(0) = - 1$ and $q_{e}(t) = t - 1$ for all $e\in E_{n}$. Test \eqref{Pblm n-Stage Weak Variational} with $q$, this yields
\begin{equation}\label{Eq first extraction of middle point}
q(0) \sum_{e \, \in \, E_{n} } K \int_{e}\deled \pn 
= \sum_{e \, \in \, E_{n} }  \int_{e} F q 
+ \hn \,q(0) .
\end{equation}
Computing and doing some estimates we get
\begin{equation*}
\# E_{n} \, c_{K} \, \vert \pn(0) \vert
\leq \frac{1}{\sqrt{3}}
\sum_{e \, \in \, E_{n} }  \Vert F \Vert_{L^{2}(e)}
+  \vert \hn \vert  .
\end{equation*}
Hence
\begin{equation*}
\vert \pn(0) \vert
\leq \frac{1}{c_{K}}
M 
+  \frac{1}{ c_{K}} \,  \Big\vert\frac{ \hn }{n} \Big\vert .
\end{equation*}
This proves the first part.

\item Let $e\in E$ be a fixed edge, let $n\in \N$ be such that $e\in E_{n}$ and let $\pn$ be the solution to \textit{Problem} \eqref{Pblm n-Stage Weak Variational}. Let $q \in H_{0}^{1}(G_{n})$ be as in the previous part and test \eqref{Pblm n-Stage Weak Variational} to get
\begin{equation*}
\begin{split}
- K(e)\pn(0) \, q(0) +  
\sum_{\substack{\sigma \, \in \, E_{n} \\
\sigma \neq e } } K \int_{\sigma} \delsig \pn \, \delsig q
& =  K(e)\, \pn(0)  -  
\sum_{\substack{\sigma \, \in \, E_{n} \\
\sigma \neq e } } K \int_{\sigma}  \delsig^{2} \pn \, q
+ q(0) \sum_{\substack{\sigma \, \in \, E_{n} \\
\sigma \neq e } } K \, \delsig \pn (0)\, 
\\
& = \int_{e} F q 
+ \sum_{\substack{\sigma \, \in \, E_{n} \\
\sigma \neq e } }  \int_{\sigma} F q 
+ \hn \,q(0) .
\end{split}
\end{equation*}
In the expression above, integration by parts was applied to each summand $\sigma\neq e$ of the left hand side, in order to get the second equality. Now, recalling that $\sum_{e\,\in E_{n}} K \,\deled \pn(0) = \hn$ and that $ - K\ind_{e} \deled^{2} \pn = F \ind_{e}$ for each $e\in E_{n}$ the equality above reduces to 
\begin{equation}\label{Eq Relation Function Derivative at the Center}
K(e)\pn(0) = \int_{0}^{1} (t-1) F_{e}(t)dt - K(e)\deled \,\pn(0) .
\end{equation}
Hence,
\begin{equation}\label{Ineq Estimate on the derivative}
\vert  \deled \,\pn(0) \vert \leq 
\vert \pn(0) \vert
+ \frac{1}{c_{K}} \, \Vert  F \Vert_{L^{2}(e)} 
\leq \frac{2}{c_{K}}
M 
+  \frac{1}{c_{K}} \, \Big\vert \frac{\hn}{n} \Big\vert .
\end{equation}
Choosing $M_{0} > 0$ large enough, the result follows. 

\item Let $q \in H_{0}^{1}(G_{n})$ be as in the previous part, testing \eqref{Pblm n-Stage Weak Variational} with it yields the equality \eqref{Eq first extraction of middle point} which is equivalent to
\begin{equation*} 
\pn(0) \, \frac{1}{n}\sum_{e \, \in \, E_{n} } K 
= \frac{1}{n} \sum_{e \, \in \, E_{n} }  \int_{e} F q 
+ \frac{1}{n}\, \hn \,q(0) .
\end{equation*}
Now, letting $n\rightarrow \infty$ the equality \eqref{Eq Middle Point Evaluation} follows because the hypothesis $K(e)> c_{K}$ for all $e\in E$ implies that $\dfrac{1}{n}\sum_{e \, \in \, E_{n}} K(e) \geq c_{K}>0$. For the convergence of $\{\deled \pn(0): e\in E_{n}\}$, let $n\rightarrow \infty$ in the expression \eqref{Eq Relation Function Derivative at the Center} to get the equality \eqref{Eq Middle Point Derivatives Evaluation}. For the uniform convergence observe that the identity \eqref{Eq Relation Function Derivative at the Center} yields
\begin{equation*}
\begin{split}
\big\vert K(e)\deled \pn(0) - L(e) \big\vert 
& = \Big\vert 
K(e) \lim_{n\rightarrow \infty} \frac{\frac{1}{n}\sum_{\sigma\in E_{n}} \int_{0}^{1} (t - 1)F_{\sigma}(t) dt
- \frac{1}{n}  \hn }{\frac{1}{n}\sum_{\sigma \in E_{n}} K(\sigma)} 
- K(e) \pn(0 )\Big\vert \\
& \leq \Vert K\Vert_{L^{\infty}} \Big\vert  \lim_{n\rightarrow \infty} \frac{\frac{1}{n}\sum_{\sigma\in E_{n}} \int_{0}^{1} (t - 1) F_{\sigma}(t) dt - \frac{1}{n} \hn }{\frac{1}{n}\sum_{\sigma\in E_{n}} K(\sigma)} 
- \pn(0 )\Big\vert.
\end{split}
\end{equation*}
Finally, choose $N\in \N$ such that the right hand side of the expression above is less than $\epsilon>0$ for all $n > N$ then, the term of the left hand side is also dominated by $\epsilon > 0$ for all $n > N$ and $e\in E_{n}$.
\qed
\end{enumerate}
\end{proof}
\begin{remark}
It is clear that in \textit{Lemma} \ref{Th Estimates on v_0 = 0} part (iii), it suffices to require the mere existence of the limit
\begin{equation*}
 \lim_{n\rightarrow \infty} \frac{\sum_{\sigma\in E_{n}} \int_{0}^{1} (t - 1) F_{\sigma}(t) dt -  \hn }{\sum_{\sigma\in E_{n}} K(\sigma)}  \, ,
\end{equation*}
in order to attain the same conclusion. 
However, the hypotheses of (iii) are necessary to identify the asymptotic problem and compute the effective coefficients.   
\end{remark}
%
%
%
%
\begin{theorem}\label{Th Bondedness of the function}
Let $F$, $\{\hn: n\in \N\}$ and $K$ verify the Hypothesis \ref{Hyp Forcing Terms and Permeability} as in Lemma \ref{Th Estimates on v_0 = 0} then
\begin{enumerate}[(i)]
\item There exists a constant $M_{1}$ such that $\Vert \pn \Vert_{H^{2}(e)} \leq M_{1}$ for all $e\in E$ and $n\in \N$ such that $e\in E_{n}$.


\item For each $e\in E$ there exists $\psupe\in H^{1}(e)$ such that $\Vert p^{n}\ind_{e} - \psupe \Vert_{H^{1}(e)} \xrightarrow[n \,\rightarrow \,\infty]{} 0$. Moreover, this convergence is uniform in the following sense
\begin{equation}\label{Eq H^1 Uniform Convergence}
\forall \, \epsilon > 0 \, \; \exists \, N\in \N\; \text{such that, if}\;n>N \, \text{and }\, e\in E_{n} ,\; \text{then }\; 
\Vert \deled \pn - \deled \psupe \Vert_{H^{1}(e)} < \epsilon\, . 
\end{equation}

\item The function $p: \Omega_{G}\rightarrow \R$ given by $p \ind_{e} \defining \psupe$ is well-defined and it will be referred to as the \textbf{limit function}. 
\end{enumerate}
\end{theorem}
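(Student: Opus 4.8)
The three claims build directly on \textit{Lemma} \ref{Th Estimates on v_0 = 0}, so the plan is to leverage the explicit representation of $\pn$ on each edge.

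\textbf{Step 1 (edgewise $H^2$-bound).} Fix an edge $e\in E$ and $n\in\N$ with $e\in E_n$. On the interior of $e$ the strong equation reads $-K(e)\deled^2\pn = F\ind_e$, so $\deled^2\pn = -F_e/K(e)$ in $L^2(0,1)$; this already gives $\Vert\deled^2\pn\Vert_{L^2(e)}\le M/c_K$ using \textit{Hypothesis} \ref{Hyp Forcing Terms and Permeability}-(i). Integrating from the boundary vertex (where $\pn=0$) and using the boundedness of $\deled\pn(0)$ from \textit{Lemma} \ref{Th Estimates on v_0 = 0}-(ii), I recover uniform bounds on $\Vert\deled\pn\Vert_{L^2(e)}$ and then on $\Vert\pn\Vert_{L^2(e)}$ (also using the bound on $\pn(0)$ from part (i)). Collecting these three bounds yields the constant $M_1$ with $\Vert\pn\Vert_{H^2(e)}\le M_1$ uniformly in $e$ and $n$; this is claim (i).

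\textbf{Step 2 (edgewise convergence).} On a fixed edge $e$, the function $\pn\ind_e$ is the unique solution of the two-point boundary value problem $-K(e)\deled^2 u = F_e$ on $(0,1)$ with $u(1)=0$ and $u(0)=\pn(0)$. By linearity, $\pn\ind_e$ is an affine function of the data pair $(\pn(0),\, \text{fixed source }F_e)$; explicitly $\pn\ind_e(t)=\pn(0)(1-t) + w_e(t)$ where $w_e$ solves the problem with zero value at both ends and does not depend on $n$. Since $\{\pn(0)\}$ converges by \textit{Lemma} \ref{Th Estimates on v_0 = 0}-(iii), the sequence $\pn\ind_e$ converges in $H^1(e)$ (indeed in $H^2(e)$) to $\psupe(t) \defining p_\infty(0)(1-t)+w_e(t)$, where $p_\infty(0)$ is the limit in \eqref{Eq Middle Point Evaluation}. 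The uniform-in-$e$ statement \eqref{Eq H^1 Uniform Convergence} then follows from the uniform convergence \eqref{Eq Middle Point Derivatives Uniform Convergence} in \textit{Lemma} \ref{Th Estimates on v_0 = 0}: the difference $\deled\pn - \deled\psupe$ on $e$ equals $(\pn(0)-p_\infty(0))\cdot(-1)$ plus a term controlled by $|K(e)\deled\pn(0)-L(e)|$, both of which are made small simultaneously for all $e\in E_n$ once $n$ exceeds the threshold $N$ from the lemma. This is claim (ii).

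\textbf{Step 3 (well-definedness of the limit function).} Claim (iii) is essentially bookkeeping: the edges of $G$ are pairwise disjoint on their interiors by \eqref{Def Domain of a graph}, so defining $p\ind_e\defining\psupe$ on each $e\in E$ unambiguously produces a function $p:\Omega_G\to\R$; no compatibility condition at vertices is needed because $\Omega_G$ excludes the vertices. One may additionally note $p(0^+)$ along each edge equals the common value $p_\infty(0)$, but this is not required for the statement.

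\textbf{Main obstacle.} The only genuinely delicate point is the uniform control in \eqref{Eq H^1 Uniform Convergence}: I must be careful that the affine decomposition $\pn\ind_e = \pn(0)(1-t)+w_e(t)$ isolates \emph{all} the $n$-dependence into the scalar $\pn(0)$, with the remainder $w_e$ genuinely independent of $n$ (it depends only on $F_e$ and $K(e)$). Once that is established, the uniformity is inherited verbatim from \textit{Lemma} \ref{Th Estimates on v_0 = 0}-(iii), so the argument is short. The $H^2$-regularity in Step 1 is routine elliptic regularity in one dimension, requiring only that $F_e\in L^2(0,1)$ with norm bounded uniformly in $e$, which is exactly \textit{Hypothesis} \ref{Hyp Forcing Terms and Permeability}-(i).
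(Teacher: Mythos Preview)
Your proof is correct, and in part (ii) it takes a cleaner route than the paper's. The paper proves (ii) by a weak-compactness argument: using the uniform $H^{2}(e)$-bound from (i), it extracts a subsequence converging weakly in $H^{2}(e)$ and strongly in $H^{1}(e)$, then builds a special test function ($\varphi$ on $e$, affine on the other edges) to derive a variational problem on $e$ alone satisfied by the limit; well-posedness of that problem pins down $\psupe$ uniquely, and a final energy estimate yields $\Vert \deled \pn - \deled \psupe \Vert_{H^{1}(e)} \le c_{K}^{-1}\vert L(e) - K(e)\deled\pn(0)\vert$, from which uniformity follows via \eqref{Eq Middle Point Derivatives Uniform Convergence}. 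Your affine decomposition $\pn\ind_{e}(t)=\pn(0)(1-t)+w_{e}(t)$ bypasses all of this: it makes the $n$-dependence a scalar, so convergence (even in $H^{2}$) and its uniformity drop out immediately from the convergence of $\pn(0)$ in \textit{Lemma} \ref{Th Estimates on v_0 = 0}-(iii). One small clean-up: with your decomposition the difference $\deled\pn-\deled\psupe$ is \emph{exactly} the constant $-(\pn(0)-p_{\infty}(0))$, so there is no additional ``term controlled by $\vert K(e)\deled\pn(0)-L(e)\vert$'' --- that quantity is already $K(e)$ times the same scalar via \eqref{Eq Relation Function Derivative at the Center}, so nothing is lost, but you can drop the hedge. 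The trade-off is that the paper's variational derivation hints at how the argument would survive in settings where no closed-form edge solution is available, while your approach is shorter, more explicit, and delivers $H^{2}$-convergence for free.
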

\begin{proof} 
\begin{enumerate}[(i)]
\item Fix $e\in E$ and let $n\in \N$ be such that $e\in E_{n}$. Since $\pn$ is the solution of \textit{Problem} \eqref{Pblm n-Stage Weak Variational} it follows that $-K(e)\,\deled^{2} \pn = F\ind_{e} \in L^{2}(e)$ for all $e\in E_{n}$, in particular $\pn\ind_{e} \in H^{2}(e)$ with $\Vert \deled^{2} \pn \Vert_{L^{2}(e)} \leq \dfrac{1}{c_{K}} \, \Vert F \Vert_{L^{2}(e)} \leq \dfrac{1}{c_{K}}  M$. On the other hand, since $\deled \pn \,\ind_{e}$ is absolutely continuous, the fundamental theorem of calculus applies, hence $\deled \pn(x)  = \deled \pn(0) + \int_{0}^{x}\del^{2} \pne (t)\, dt = \deled \pn(0) + \int_{0}^{x} F_{e}(t)\, dt$ for all $x\in e$. Therefore,
\begin{equation*}
\vert \deled \pn(x) \vert^{2} = 2 \vert \deled \pn(0)\vert^{2} 
+ 2 \,x \,\Vert F\Vert_{L^{2}(e)}^{2} \leq 2\, M_{0}^{2} + 2 M^{2} .
\end{equation*}
Where $M_{0}$ is the global bound found in \textit{L}emma \ref{Th Estimates on v_0 = 0}-(ii) above. 
Integrating along the edge $e$ gives $\Vert \deled \pn \Vert_{L^{2}(e)} \leq  \sqrt{2(M_{0}^{2}+ M^{2})}$. Next, given that $\pn(v) = 0$ for all $v\in E_{n}$, repeating the previous argument yields $\Vert  \pn \Vert_{L^{2}(e)} \leq  \sqrt{2(M_{0}^{2}+ M^{2})}$. Finally, since $\Vert \deled^{2} \pn \Vert_{L^{2}}(e) \leq \dfrac{1}{c_{K}}  M$, the result follows for any $M_{1}$ satisfying
\begin{equation*}
M_{1}^{2} \geq 4M_{0}^{2}  + \Big(4 + \frac{1}{c_{K}^{2}} \Big)  M^{2} .
\end{equation*}

\item Fix $e\in E$, due to the previous part the sequence $\{\pn\ind_{e}: e\in E_{n} \}$ is bounded in $H^{2}(e)$, then there exists $\psupe\in H^{2}(e)$ and a subsequence $\{n_{k}: k\in \N \}$ such that 
\begin{equation*}
p^{n_{k}} \xrightarrow[k\,\rightarrow \,\infty]{} \psupe \quad \text{weakly in}\, H^{2}(e) \; \text{and strongly in}\, H^{1}(e).
\end{equation*}
Let $\varphi \in H^{1}(e)$ such that equals zero on the boundary vertex of $e$. Let $q$ be the function in $H_{0}^{1}(G_{n})$ such that $q_{e} = \varphi$ and $q_{\sigma} (t)=\varphi(0) (1 - t)$ is linear for all $\sigma\in E_{n} -\{e\}$. Test \textit{Problem} \eqref{Pblm n-Stage Weak Variational} with this function to get
\begin{equation*}
\int_{e}K(e)\deled \pn \, \deled q+  
\sum_{\substack{\sigma \, \in \, E_{n} \\
\sigma \neq e } }  K \int_{\sigma} \delsig \pn \, \delsig q
= \int_{e} F q 
+ \sum_{\substack{\sigma \, \in \, E_{n} \\
\sigma \neq e } }  \int_{\sigma} F q 
+ \hn \,q(0) .
\end{equation*}
Integrating by parts the second summand of the left hand side yields
\begin{equation*}
\int_{e}K(e)\deled \pn \, \deled \varphi -  
\sum_{\substack{\sigma \, \in \, E_{n} \\
\sigma \neq e } }  K \int_{\sigma} \delsig^{2} \pn \,  q
- \varphi(0)\sum_{\substack{\sigma \, \in \, E_{n} \\
\sigma \neq e } }  K \, \delsig \pn (0) 
= \int_{e} F \varphi 
+ \sum_{\substack{\sigma \, \in \, E_{n} \\
\sigma \neq e } }  \int_{\sigma} F \varphi 
+ \hn \,q(0) .
\end{equation*}
Since $\pn$ is a solution of the problem, the above reduces to
\begin{equation}\label{Stmt Isolated Edge Variational Statement n-Stage}
\int_{e}K(e)\deled \pn \, \deled \varphi 
+  K(e)\deled \pn (0)\, \varphi(0)
= \int_{e} F q  .
\end{equation}
Equality \eqref{Stmt Isolated Edge Variational Statement n-Stage} holds for all $n\in \N$, in particular it holds for the convergent subsequence $\{n_{k}: k\in \N\}$, taking limit on this sequence and recalling \eqref{Eq Middle Point Derivatives Evaluation}, we have
\begin{equation}\label{Stmt Isolated Edge Variational Statement Limit}
\int_{e}K(e)\deled \psupe \, \deled \varphi 
= \int_{e} F \varphi  
- L(e) \, \varphi(0).
\end{equation}
The statement \eqref{Stmt Isolated Edge Variational Statement Limit} holds for all $\varphi\in H^{1}(e)$ vanishing at $v_{e}$, the boundary vertex of $e$. Define the space $H(e)\defining \{ \varphi \in H^{1}(e): \varphi(v_{e}) = 0\}$ and consider the problem
\begin{align}\label{Eq Limit Problem on the Edge}
& u\in H(e): &
\int_{e}K(e)\deled u \, \deled \varphi  = \int_{e} F  \varphi  
- L(e) \, \varphi(0) \, ,&
& \forall \,\varphi \in H(e) .
\end{align}
Due to the Lax-Milgram Theorem the problem above is well-posed, additionally it is clear that $\psupe\in H(e)$, therefore it is the unique solution to the \textit{Problem} \eqref{Eq Limit Problem on the Edge} above. Now, recall that $\{\pn\ind_{e}:  e\in E_{n}\}$ is bounded in $H^{2}(e)$ and that the previous reasoning applies for every strongly $H^{1}(e)$-convergent subsequence, therefore its limit is the unique solution to \textit{Problem} \eqref{Eq Limit Problem on the Edge}. Consequently, due to Rellich-Kondrachov, it follows that the whole sequence converges strongly. Next, for the uniform convergence test both statements \eqref{Stmt Isolated Edge Variational Statement n-Stage} and \eqref{Stmt Isolated Edge Variational Statement Limit} with $(\pn\ind_{e} - \psupe )$ and subtract them to get 
\begin{equation*} 
\begin{split}
c_{K}\Vert \deled \pn - \deled \psupe \Vert^{2}_{H^{1}(e)} \leq 
K(e)\int_{e}\big\vert \deled \pn - \deled \psupe \big\vert^{2} & =
\big(  L(e) - K(e)\deled \pn (0) \big) \big(\pn(0) - \psupe (0) \big) \\
& \leq \big\vert L(e) - K(e)\deled \pn (0) \big\vert \, \Vert \deled \pn - \deled \psupe \Vert_{H^{1}(e)} .
\end{split}
\end{equation*}
The above yields
\begin{equation*} 
\Vert \deled \pn - \deled \psupe \Vert_{H^{1}(e)} 
\leq \frac{1}{c_{K}} \,\big\vert  L(e) - K(e)\deled \pn (0) \big\vert .
\end{equation*}
Now, the uniform convergence \eqref{Eq H^1 Uniform Convergence} follows from the \textit{Statement} \eqref{Eq Middle Point Derivatives Uniform Convergence}, which concludes the second part. 

\item Since $\psupe (0) = \lim\limits_{n\to \infty} \pn(0)$ for all $e\in E$ then, the limit function $p$ is well-defined and the proof is complete.
\qed
\end{enumerate}
\end{proof}
\section{The Homogenized Problem: a Ces\`{a}ro Average Approach}\label{Sec Radial Approach}
%
%
%
%
In this section we study the asymptotic properties of the global behavior of the solutions $\{\pn:n\in \N \}$. It will be seen that such analysis must be done for certain type of ``Ces\`{a}ro averages" of the solutions. This is observed by the techniques and the hypotheses  of \textit{Lemma} \ref{Th Estimates on v_0 = 0}, which are necessary to conclude the local convergence of $\{\pn\ind_{e}: e \in E_{n} \}$. Additionally, the type of estimates and the numerical experiments suggest this physical magnitude as the most significant for global behavior analysis and upscaling purposes. We start introducing some necessary hypotheses.
%
%
%
%
%
%
%
%
%
%
%
%
\begin{hypothesis}\label{Hyp Forcing Term Cesaro Means Assumptions}
Suppose that $F$, $\{\hn: n\in \N\}$ and $K$ verify \textit{Hypothesis} \ref{Hyp Forcing Terms and Permeability} and, additionally 
\begin{enumerate}[(i)]
\item The diffusion coefficient $K: \Omega_{G}\rightarrow (0, \infty)$ has finite range. Moreover, if $K(E) = \{K_{i}: 1\leq  i \leq I \}$ and $B_{i} \defining \{e\in E: K(e) = K_{i}\}$, then 
\begin{equation}
\frac{1}{n}\sum_{e\, \in \, E_{n} \cap B_{i}} K(e) 
= \frac{\# (E_{n}\cap B_{i})}{n} K_{i}
\xrightarrow[n\,\rightarrow \,\infty]{} s_{i} \, K_{i}.
\end{equation}
With $s_{i} > 0$ for all $1\leq i\leq I$ and such that $\sum\limits_{i = 1}^{I} s_{i} = 1$.

\item The forcing term $F$ satisfies that
\begin{align}\label{Eq Forcing Term Cesaro Means Assumptions}
& \frac{1}{\#(E_{n}\cap B_{i})}\sum_{e\,\in \,E_{n} \cap B_{i}} F_{e} 
\xrightarrow[n\,\rightarrow \, \infty]{} \Fi \, , &
& \forall \; 1\leq i\leq I.
\end{align}
Where $\Fi\in L^{2}(0,1)$ and the sense of convergence is pointwise almost everywhere. 

\item The sequence $\big\{\dfrac{1}{n} \, \hn: n\in \N \big\}$ is convergent with $\h = \lim\limits_{n\rightarrow \infty} \dfrac{1}{n}\, \hn$.
\end{enumerate}
\end{hypothesis}
\begin{remark}\label{Rem Riemann Integrability function}
\begin{enumerate}[(i)]
\item Notice that if (i) and (ii) in \textit{Hypothesis} \ref{Hyp Forcing Term Cesaro Means Assumptions} are satisfied, then
\begin{equation*} 
\frac{1}{n}\sum_{e\,\in\, E_{n}} F_{e} = 
 \sum_{i\, =\, 1}^{I} \frac{\#(E_{n}\cap B_{i})}{n}
 \frac{1}{\#(E_{n}\cap B_{i})}\sum_{e\,\in \,E_{n} \cap B_{i}} F_{e} 
\xrightarrow[n\,\rightarrow \, \infty]{} \sum_{i\, =\, 1}^{I}s_{i}\Fi \, .
\end{equation*}
Hence, the sequence $\{F_{e}: e\in E \}$ is Ces\`{a}ro convergent.

\item 
A familiar context for the required convergence statement \eqref{Eq Forcing Term Cesaro Means Assumptions} in \textit{Hypothesis} \ref{Hyp Forcing Term Cesaro Means Assumptions} above is the following. Let $F$ be a continuous and bounded function defined on the whole disk $\Omega$ and suppose that for each $1\leq i \leq I$, the sequence of vertices $\{v_{n}: n\in \N, \, v_{n}v_{0} \in B_{i} \}$ is equidistributed on $S^{1}$. Then, due to \textit{Weyl's Theorem} \ref{Th Weyl's Theorem}, for any fixed $t\in (0,1)$ it holds that $\dfrac{1}{\#(E_{n} \cap B_{i})}\sum_{e\,\in\,E_{n} \cap B_{i}} F_{e}(t) \xrightarrow[n\,\rightarrow \, \infty]{}
\mrad[f]$ i.e, the angular average introduced in \textit{Definition} \ref{Def angular average}.   
\end{enumerate}
\end{remark}
%
%
%
%
%
\subsection{Estimates and Ces\`aro Convergence Statements}\label{Sec Estimates and Cesaro Convergence}
%
%
%
%
%
\begin{lemma}\label{Th Boundedness on Each Choice}
Let $F$, $\{\hn:n\in \N \}$ and $K$ verify Hypothesis \ref{Hyp Forcing Term Cesaro Means Assumptions} then
\begin{enumerate}[(i)]
\item The sequence $\big\{\dfrac{1}{n}\sum_{e\,\in\, E_{n}} \pn_{e} : n\in \N \big\}$ is bounded in $H^{2}(0,1)$. 

\item The sequence 
\begin{equation} 
\Big\{\frac{1}{\#(E_{n} \cap B_{i})}\sum_{e\,\in\, E_{n} \cap B_{i}} \pn_{e} : n\in \N \Big\}
\end{equation}
is bounded in $H^{2}(0,1)$ for all $i\in \{ 1, \ldots, I\}$.
\end{enumerate}
\end{lemma}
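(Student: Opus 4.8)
The key observation is that the averaged functions inherit, term by term, the uniform $H^2(e)$-bounds already established in \textit{Theorem} \ref{Th Bondedness of the function}-(i). More precisely, for each $n\in\N$ set $\phd \defining \frac{1}{n}\sum_{e\,\in\,E_{n}} \pn_{e}$ and, for each $i$, $\pld \defining \frac{1}{\#(E_{n}\cap B_{i})}\sum_{e\,\in\,E_{n}\cap B_{i}} \pn_{e}$; these are functions on $(0,1)$ obtained by averaging the one-dimensional parametrizations $\pn_{e}$. Since averaging is a convex combination, the triangle inequality in $H^{2}(0,1)$ gives immediately
\begin{equation*}
\Big\Vert \frac{1}{n}\sum_{e\,\in\,E_{n}} \pn_{e} \Big\Vert_{H^{2}(0,1)}
\leq \frac{1}{n}\sum_{e\,\in\,E_{n}} \Vert \pn_{e} \Vert_{H^{2}(0,1)}
\leq \frac{1}{n}\sum_{e\,\in\,E_{n}} M_{1} = M_{1},
\end{equation*}
and likewise $\Vert \pld \Vert_{H^{2}(0,1)} \leq M_{1}$ for every $i$ and $n$, where $M_{1}$ is the constant from \textit{Theorem} \ref{Th Bondedness of the function}-(i), which does not depend on the edge or on $n$. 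This proves both (i) and (ii) at once.

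\textbf{Order of steps.} First I would recall explicitly that, under \textit{Hypothesis} \ref{Hyp Forcing Term Cesaro Means Assumptions}, \textit{Hypothesis} \ref{Hyp Forcing Terms and Permeability} holds, so that \textit{Theorem} \ref{Th Bondedness of the function}-(i) applies and yields a single constant $M_{1}$ with $\Vert \pn\ind_{e}\Vert_{H^{2}(e)}\leq M_{1}$ for all $e\in E$ and all $n$ with $e\in E_{n}$. Second, I would note that the $H^{2}(e)$-norm of $\pn\ind_{e}$ equals the $H^{2}(0,1)$-norm of its parametrization $\pn_{e}$ (the edges have unit length in this geometric setting, so the parametrization is an isometry of the relevant Sobolev spaces). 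Third, I would invoke the triangle inequality and the fact that the coefficients $1/n$ (resp.\ $1/\#(E_{n}\cap B_{i})$) sum to one, to conclude that each average has $H^{2}(0,1)$-norm bounded by $M_{1}$, uniformly in $n$ (and in $i$, since $I$ is finite and fixed).

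\textbf{Main obstacle.} There is essentially no analytic difficulty here; the only points requiring care are bookkeeping ones. One should make sure that for the averages in (ii) to make sense one needs $\#(E_{n}\cap B_{i})\neq 0$, which holds for $n$ large enough by \textit{Hypothesis} \ref{Hyp Forcing Term Cesaro Means Assumptions}-(i) since $s_{i}>0$; for the finitely many small $n$ where a given $B_{i}$ is not yet represented the statement is vacuous or the average can be taken to be $0$. One should also be careful that the convergence hypotheses of \textit{Lemma} \ref{Th Estimates on v_0 = 0}-(iii) are indeed implied by \textit{Hypothesis} \ref{Hyp Forcing Term Cesaro Means Assumptions} — the Ces\`aro convergence of $\{F_{e}\}$ follows from \textit{Remark} \ref{Rem Riemann Integrability function}-(i), the convergence of $\frac{1}{n}\hn$ is assumed directly, and the convergence of $\frac{1}{n}\sum_{e\in E_{n}} K(e)$ follows by summing the limits in \textit{Hypothesis} \ref{Hyp Forcing Term Cesaro Means Assumptions}-(i) over the finitely many classes $i$ — so that $M_{1}$ from \textit{Theorem} \ref{Th Bondedness of the function} is legitimately available. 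With these remarks in place the proof is a two-line application of the triangle inequality.
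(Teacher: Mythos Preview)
Your argument is correct and in fact more economical than the paper's. The paper does not invoke \textit{Theorem} \ref{Th Bondedness of the function}-(i); instead, for part (i) it re-derives an $H^{1}$-bound from scratch by testing \textit{Problem} \eqref{Pblm n-Stage Weak Variational} with $\pn$ itself, obtaining an energy inequality that controls $\frac{1}{n}\sum_{e\in E_{n}}\Vert \pn\Vert_{H^{1}(e)}^{2}$, and then bounds the second derivative of the average directly from $-K(e)\deled^{2}\pn = F\ind_{e}$. Part (ii) in the paper is deduced from part (i) by writing $\frac{n}{\#(E_{n}\cap B_{i})}\cdot\frac{1}{n}\sum_{e\in E_{n}}\Vert\pne\Vert_{H^{2}(0,1)}$ and using that the first factor is bounded since $s_{i}>0$. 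Your route is shorter because the uniform edgewise bound $\Vert \pn\Vert_{H^{2}(e)}\leq M_{1}$ is already available and averaging a convex combination cannot increase the norm; it also dispenses with the quadratic-inequality step and the separate treatment of the second derivative. The paper's route, on the other hand, is more self-contained in the sense that it does not rely on the bound $M_{0}$ for $\deled\pn(0)$ that underlies $M_{1}$.

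One minor comment: your caution about verifying the convergence hypotheses of \textit{Lemma} \ref{Th Estimates on v_0 = 0}-(iii) is unnecessary here. \textit{Theorem} \ref{Th Bondedness of the function}-(i) relies only on parts (i) and (ii) of that lemma, which need just \textit{Hypothesis} \ref{Hyp Forcing Terms and Permeability}; since \textit{Hypothesis} \ref{Hyp Forcing Term Cesaro Means Assumptions} explicitly subsumes it, the constant $M_{1}$ is available without any appeal to the Ces\`aro convergence of $\{F_{e}\}$ or $\{K(e)\}$.
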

\begin{proof}
\begin{enumerate}[(i)]
\item Test \textit{Problem} \eqref{Pblm n-Stage Weak Variational} with $\pn$ to get
\begin{equation*} 
\begin{split}
c_{K} \sum_{e \, \in \, E_{n}} \Vert \deled \pn \Vert_{L^{2}(e)}^{2} \leq 
\sum_{e \, \in \, E_{n}} \int_{e} K \vert \del\pn \vert^{2} 
& = 
\sum_{e \, \in \, E_{n}} \int_{e} F_{e}\,  \pn 
+ \hn \,  \pn (v_{0}) \\
& \leq \Big(\sum_{e \, \in \, E_{n}} \Vert  F \Vert_{L^{2}(e)}^{2}\Big)^{1/2}
\Big(\sum_{e \, \in \, E_{n}} \Vert  \pn \Vert_{L^{2}(e)}^{2}\Big)^{1/2}
+ \vert \hn \vert \,  \vert \pn (v_{0})\vert .
\end{split} 
\end{equation*}
Since $\pn(v_{e}) = 0$ for all $e\in E_{n}$, then $\Vert \pn \Vert_{L^{2}(e)}\leq \Vert \del\pn \Vert_{L^{2}(e)}$ and $\Vert \pn \Vert_{H^{1}(e)} \leq \sqrt{2} \, \Vert \del\pn \Vert_{L^{2}(e)}$. Hence, dividing the above expression over $n$ gives
\begin{equation*} 
\begin{split}
\frac{1}{n} \sum_{e \, \in \, E_{n}} \Vert \pn \Vert_{H^{1}(e)}^{2}
& \leq 2 \, \Big(\frac{1}{n}\sum_{e \, \in \, E_{n}} \Vert  F \Vert_{L^{2}(e)}^{2}\Big)^{1/2}
\Big(\frac{1}{n}\sum_{e \, \in \, E_{n}} \Vert   \pn \Vert_{H^{1}(e)}^{2}\Big)^{1/2}
+ 2\, \frac{\vert \hn \vert}{n} \,  \vert \pn(v_{0}) \vert \\
& \leq 2\, \frac{M}{c_{K}} \Big(\frac{1}{n}\sum_{e \, \in \, E_{n}} \Vert  \pn \Vert_{H^{1}(e)}^{2}\Big)^{1/2} 
+ C \, .
\end{split}
\end{equation*}
Here $C> 0$ is a generic constant independent from $n\in \N$. 
In the second line of the expression above, we used that $M = \sup_{e\in E_{n}} \Vert F \Vert_{L^{2}(e)} < +\infty$, $\{\frac{1}{n}\, \hn: n\in \N \}$ are bounded and that $\{\pn(v_{0}): n\in \N \}$ is convergent (therefore bounded) as stated in \textit{Lemma} \ref{Th Estimates on v_0 = 0}-(i). Hence, the sequence $x_{n} \defining (\frac{1}{n} \sum_{e \, \in \, E_{n}} \Vert  \pn \Vert_{H^{1}(e)}^{2} )^{1/2}$ is such that $x_{n}^{2} \leq 2\, \frac{M}{c_{K}}\, x_{n} + C$ for all $n\in \N$, where the constants are all non-negative. Then $\{x_{n}: n\in \N \}$ must be bounded, but this implies
\begin{equation*} 
\Big\Vert \frac{1}{n} \sum_{e\, \in \, E_{n}} \pne \Big\Vert_{H^{1}(0,1)}
\leq \frac{1}{n}\sum_{e \, \in \, E_{n}} \Vert  \pne \Vert_{H^{1}(0,1)}
=  \frac{1}{n}\sum_{e \, \in \, E_{n}} \Vert  \pn \Vert_{H^{1}(e)}
\leq \Big(\frac{1}{n}\sum_{e \, \in \, E_{n}} \Vert  \pn \Vert_{H^{1}(e)}^{2}\Big)^{1/2} \, .
\end{equation*}
Finally, recalling the estimate 
\begin{equation*} 
c_{K}\Big\Vert \frac{1}{n} \sum_{e\, \in \, E_{n}} \del^{2}\pne \Big\Vert_{L^{2}(0,1)}
\leq  \frac{1}{n} \sum_{e\, \in \, E_{n}} \big\Vert \del K(e)\del \pne \big\Vert_{L^{2}(0,1)}
=  \frac{1}{n} \sum_{e\, \in \, E_{n}} \Vert F \Vert_{L^{2}(0,1)}
\leq M ,
\end{equation*}
the result follows. 

\item Fix $i\in \{1, 2, \ldots, I\}$ then
\begin{equation*} 
\Big\Vert\frac{1}{\#(E_{n} \cap B_{i})}\sum_{e\,\in\, E_{n} \cap B_{i}} \pn_{e}  \Big\Vert_{H^{2}(0,1)}
\leq  \frac{n}{\#(E_{n} \cap B_{i})}\, \frac{1}{n} \sum_{e\, \in \, E_{n}} \big\Vert  \pne \big\Vert_{H^{2}(0,1)} .
\end{equation*}
On the right hand side term of the expression above, the first factor is bounded due toHypothesis \ref{Hyp Forcing Term Cesaro Means Assumptions}- (iii), while the boundedness of the second factor was shown in the previous part. Therefore, the result follows.
\qed
\end{enumerate}
\end{proof}
Before presenting the limit problem we introduce some necessary definitions and notation
\begin{definition}\label{Del Limit Graph and Embedding Technique}
Let $K$ verify \textit{Hypothesis} \ref{Hyp Forcing Term Cesaro Means Assumptions} and $I = \#K(E)$ then
\begin{enumerate}[(i)]
\item For all $1\leq i \leq I$ define $w_{i} \defining \big(\cos (\frac{2\pi}{I}\, i), \sin (\frac{2\pi}{I}\, i) \big) \in S^{1}$, $w_{0} \defining v_{0} = 0$ and $\V \defining \{w_{i}: 0\leq i\leq I \} $.

\item For all $1\leq i \leq I$ define the edges $\sigma_{i} \defining w_{0}w_{i}$ and $\E \defining \{\sigma_{i}: 1\leq i\leq I \}$.

\item Define the \textbf{upscaled graph} by $\G \defining (\V, \E )$.

\item For any $\varphi\in H_{0}^{1}(\G)$ and $n\in \N$ denote by $T_{n}\varphi\in H_{0}^{1}(G_{n})$, the function such that $\big(T_{n}\varphi\big) \ind_{e}$ agrees with $\varphi \ind_{\sigma_{i}}$ whenever $e\in B_{i}$. This is summarized in the expression
\begin{equation*}
T_{n}\varphi = \sum_{e\,\in \, E_{n}} \big(T_{n}\varphi\big) \ind_{e} 
\defining 
\sum_{i \, = \, 1}^{I} \; 
\sum_{e\, \in  \, E_{n} \cap B_{i}}  \big(\varphi \ind_{\sigma_{i}}\big)  \, .
\end{equation*}
In the sequel we refer to $T_{n} \varphi$ as the $\bm{H_{0}^{1}(G_{n})}$\textbf{-embedding} of $\varphi$.

\item In the following, for any $1\leq i \leq I$, we adopt the notation $\deli \defining \del_{\sigma_{i}}$. Similarly, for any given function $f: \Omega_{\G}\rightarrow \R$ and edge $\sigma_{i} \in \E$ we denote by $f_{i}: (0,1)\rightarrow \R$, the real variable function $f_{i}(t) \defining (f\ind_{\sigma_{i}})\big(t \cos (\frac{2\pi}{I} i), t \sin (\frac{2\pi}{I} i)\big)$.
\end{enumerate}
\end{definition}
\begin{theorem}\label{Th The Upscaled Problem}
Let $F$, $\{\hn:n\in \N \}$ and $K$ verify Hypothesis \ref{Hyp Forcing Term Cesaro Means Assumptions} then
\begin{enumerate}[(i)]
\item The following problem is well-posed.
\begin{align}\label{Stmt limit problem}
& \p \in H_{0}^{1}(\G) : &
&  \sum_{i \, = \, 1}^{I} \int_{\sigma_{i}} s_{i} K_{i} \, \deli \p \; \deli q = 
\sum_{i \, = \, 1}^{I}  \int_{0}^{1} s_{i} \, \F_{i}\,  q_{i} 
+ \h(0)\,  q(0) \, , &
& \forall \, q\in H_{0}^{1}(\G) .
\end{align}
In the sequel, we refer to \textit{Problem} \eqref{Stmt limit problem} as the \textbf{upscaled} or the \textbf{homogenized problem} and its solution $\p$ as the \textbf{upscaled} or the \textbf{homogenized solution} indistinctly. 

\item The sequence of solutions $\{\pn:n\in \N \}$ satisfy
\begin{align}\label{Stmt Strong Convergence to Limit Solutions}
& \Big\Vert \frac{1}{\# (E_{n}\cap B_{i})}\sum_{e \, \in \, E_{n} \cap B_{i} }\pne - \plimi\, \Big\Vert_{H^{1}(0,1)}\xrightarrow[n\,\rightarrow \, \infty]{} 0 \, , &
& \forall\; 1\leq i\leq I \, .
\end{align}

\item The limit function $p:\Omega_{G}\rightarrow \R$ satisfies
\begin{align*}
& \Big\Vert \frac{1}{\# (E_{n}\cap B_{i})} \sum_{e \, \in \, E_{n} \cap B_{i}}\pe - \plimi\, \Big\Vert_{H^{1}(0,1)}\xrightarrow[n\,\rightarrow \, \infty]{} 0 \, , &
&  \forall\; 1\leq i\leq I \, .
\end{align*}
\begin{equation*}
\Big\Vert \frac{1}{n} \sum_{e \, \in \, E_{n} }\pe - \sum_{i\, =\, 1}^{I} s_{i} \, \plimi\, \Big\Vert_{H^{1}(0,1)}\xrightarrow[n\,\rightarrow \, \infty]{} 0 \, .
\end{equation*}
\end{enumerate}
\end{theorem}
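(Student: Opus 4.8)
The plan is to prove the three claims in order, using the per-edge convergence already established in \textit{Theorem} \ref{Th Bondedness of the function} together with the Ces\`aro hypotheses of \textit{Hypothesis} \ref{Hyp Forcing Term Cesaro Means Assumptions}. For part (i), I would simply invoke Lax--Milgram on $H_{0}^{1}(\G)$: the bilinear form $\sum_{i} \int_{\sigma_{i}} s_{i} K_{i}\, \deli \p\, \deli q$ is bounded and, since each $s_{i} K_{i} \geq c_{K} \min_{i} s_{i} > 0$ and $\G$ is a finite connected metric graph with nonempty boundary, coercive by the Poincar\'e inequality of \textit{Remark} \ref{Rem Identification of the Directional Derivative}\eqref{Rem inner product equivalence}; the right-hand side is a bounded linear functional since each $\F_{i}\in L^{2}(0,1)$ and point evaluation at $w_{0}$ is continuous on $H_{0}^{1}(\G)$. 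This is routine.

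For part (ii), I would fix $i$ and consider the averaged functions $P^{n}_{i} \defining \frac{1}{\#(E_{n}\cap B_{i})}\sum_{e\in E_{n}\cap B_{i}} \pne$. By \textit{Lemma} \ref{Th Boundedness on Each Choice}-(ii) these are bounded in $H^{2}(0,1)$, hence (along a subsequence) converge weakly in $H^{2}(0,1)$ and strongly in $H^{1}(0,1)$ to some limit $\widetilde{p}_{i}$. To identify $\widetilde{p}_{i}$, I would start from the isolated-edge identity \eqref{Stmt Isolated Edge Variational Statement n-Stage}: for each $e\in E_{n}\cap B_{i}$ and each $\varphi\in H(\sigma_{i})$,
\begin{equation*}
K_{i}\int_{0}^{1}\deled \pn\, \del \varphi + K_{i}\,\deled\pn(0)\,\varphi(0) = \int_{0}^{1} F_{e}\, q_{e},
\end{equation*}
where $q_{e}$ extends $\varphi$ linearly on the other edges. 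Averaging this over $e\in E_{n}\cap B_{i}$ and letting $n\to\infty$, the first term converges to $K_{i}\int_{0}^{1}\del\widetilde{p}_{i}\,\del\varphi$ by the strong $H^{1}$ convergence of the averages; the second term needs the limit of $\frac{1}{\#(E_{n}\cap B_{i})}\sum_{e\in E_{n}\cap B_{i}} K_{i}\deled\pn(0)$, which by \eqref{Eq Middle Point Derivatives Evaluation} equals $L(e)$ in the limit uniformly over $e\in E_{n}$ (\textit{Statement} \eqref{Eq Middle Point Derivatives Uniform Convergence}), and $L(e)$ itself depends on $e$ only through $K(e)=K_{i}$, so the average of $L$ over $E_{n}\cap B_{i}$ is controlled by the convergence \eqref{Eq Forcing Term Cesaro Means Assumptions} of $\frac{1}{\#(E_{n}\cap B_{i})}\sum F_{e}\to\F_{i}$; the right-hand side converges to $\int_{0}^{1}\F_{i}\,\varphi + (\text{linear tail correction})$. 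Assembling the pieces and summing the coupling conditions at $w_{0}$ across $i$ — using $\#(E_{n}\cap B_{i})/n\to s_{i}$ — yields precisely the coupled variational system \eqref{Stmt limit problem} that $(\widetilde{p}_{1},\dots,\widetilde{p}_{I})$ must satisfy; by well-posedness from part (i) the limit is unique, so the whole sequence converges, and the convergence is in $H^{1}(0,1)$ by Rellich--Kondrachov from the $H^{2}$ bound.

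For part (iii), the limit function $p$ has $p\ind_{e} = \psupe$ with $\psupe(0) = \lim_{n}\pn(0)$ independent of $e$, and $\psupe$ solves \eqref{Eq Limit Problem on the Edge} with datum $L(e)$ depending only on $K(e)$; so $\frac{1}{\#(E_{n}\cap B_{i})}\sum_{e\in E_{n}\cap B_{i}}\pe$ is a Ces\`aro average of solutions of edge problems whose forcing terms $F_{e}$ Ces\`aro-converge to $\F_{i}$, hence converges in $H^{1}(0,1)$ to the solution of the averaged edge problem, which one checks (again via \eqref{Stmt Isolated Edge Variational Statement Limit}, averaged) coincides with $\plimi$; the second display follows by splitting $\frac{1}{n}\sum_{e\in E_{n}} = \sum_{i}\frac{\#(E_{n}\cap B_{i})}{n}\cdot\frac{1}{\#(E_{n}\cap B_{i})}\sum_{e\in E_{n}\cap B_{i}}$ and using $\#(E_{n}\cap B_{i})/n\to s_{i}$ together with the per-class convergence just proved.

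The main obstacle will be the coupling term at the center: one must pass to the limit in $\sum_{i}\frac{1}{\#(E_{n}\cap B_{i})}\sum_{e\in E_{n}\cap B_{i}} K_{i}\deled\pn(0)\,\varphi(0)$ and recognize it as the flux-balance part of \eqref{Stmt limit problem}. This requires carefully tracking that $\deled\pn(0)$ converges \emph{uniformly} in $e$ to a quantity depending only on the class $B_{i}$ (that is exactly what \eqref{Eq Middle Point Derivatives Uniform Convergence} and the Ces\`aro hypothesis \eqref{Eq Forcing Term Cesaro Means Assumptions} buy us), and that the weak $H^{2}$ / strong $H^{1}$ limit of the averaged solutions is compatible with these center values; everything else is bookkeeping with finite sums and the Cauchy--Schwarz estimates already present in \textit{Lemma} \ref{Th Boundedness on Each Choice}.
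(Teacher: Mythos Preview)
Your approach to part (ii) is genuinely different from the paper's, and the difference exposes a real gap in the proposal. The paper does not start from the per-edge identity \eqref{Stmt Isolated Edge Variational Statement n-Stage} at all. Instead it takes an arbitrary $\varphi\in H_0^1(\G)$, forms its $H_0^1(G_n)$-embedding $T_n\varphi$ (the function that copies $\varphi\ind_{\sigma_i}$ onto every edge of class $B_i$), and tests the full coupled $n$-stage problem \eqref{Pblm n-Stage Weak Variational} with $\frac{1}{n}T_n\varphi$. A direct regrouping of the sums by color class gives an identity whose left side is $\sum_i \frac{\#(E_n\cap B_i)}{n}K_i\int_0^1 \del P_i^n\,\del\varphi_i$ and whose right side already contains $\frac{\hn}{n}\varphi(0)$; passing to the limit along a weak-$H^2$/strong-$H^1$ convergent subsequence of the $P_i^n$ (from \textit{Lemma} \ref{Th Boundedness on Each Choice}) yields \eqref{Stmt limit problem} directly, and well-posedness from (i) upgrades subsequential to full convergence. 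The coupling at the central vertex is handled automatically because it was never dismantled.

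Your route dismantles the coupling (the isolated-edge identity \eqref{Stmt Isolated Edge Variational Statement n-Stage} was obtained precisely by cancelling the Kirchhoff/$\hn$ contribution against the other edges) and then must rebuild it. Two concrete issues: first, your assertion that ``$L(e)$ depends on $e$ only through $K(e)=K_i$'' is wrong --- from \eqref{Eq Middle Point Derivatives Evaluation}, $L(e)=\int_0^1(t-1)F_e(t)\,dt - K(e)\lim_n\pn(0)$, so it also depends on $F_e$. Second, and more seriously, you never explain how the $\h\,q(0)$ term reappears. Multiplying the averaged per-class identities by $s_i$ and summing over $i$ produces a center term $\varphi(0)\sum_i s_i\overline{L}_i$, and identifying this with $-\h\,\varphi(0)$ requires reinvoking the Kirchhoff balance $\sum_{e\in E_n}K(e)\deled\pn(0)+\hn=0$ in the limit, which you do not do. You also omit the check that the class-wise limits $\widetilde p_i$ agree at $0$ (so that the assembled limit lies in $H_0^1(\G)$); the paper gets this for free from $P_i^n(0)=\pn(0)$ for every $i$. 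Your argument can be repaired, but the paper's embedding-and-test approach is both shorter and avoids all three difficulties. Parts (i) and (iii) are essentially as in the paper; for (iii) note that the paper's one-line argument is simply triangle inequality between $\frac{1}{\#(E_n\cap B_i)}\sum p_e$ and $\frac{1}{\#(E_n\cap B_i)}\sum \pn_e$ together with the uniform bound \eqref{Eq H^1 Uniform Convergence}, rather than re-solving averaged edge problems.
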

\begin{proof}
\begin{enumerate}[(i)]
\item It follows immediately from \textit{Proposition} \ref{Th Well-Posedness Weak variational on Graphs}.

\item Let $\varphi \in H_{0}^{1}(\G)$ and let $T_{n}\varphi$ be its $H_{0}^{1}(G_{n})$-embedding. Notice the equalities 
\begin{subequations}\label{Eq associations by color}
\begin{equation}\label{Eq associations by color of solution}
\begin{split}
\frac{1}{n}\sum_{e \, \in \, E_{n}} \int_{e} K \deled \pn_{e}\, \deled T_{n}\varphi 
& = \sum_{i\, = \, 1}^{I}\frac{1}{n} \, K_{i}\sum_{e \, \in \, E_{n} \cap B_{i}} \int_{e} \deled \pn\, \deled T_{n} \varphi \\
& = \sum_{i\, = \, 1}^{I}\frac{\# (E_{n} \cap B_{i})}{n} \, K_{i}
\int_{0}^{1} \, \del \Big( \frac{1}{\# (E_{n} \cap B_{i})}\sum_{e \, \in \, E_{n} \cap B_{i}} \pne \Big)\, \del \varphi_{i} \, ,
\end{split}
\end{equation}
and
\begin{equation}\label{Eq associations by color of forcing term}
%
\frac{1}{n}\sum_{e \, \in \, E_{n}} \int_{e} F\,  T_{n} \varphi
%
= \sum_{i\, = \, 1}^{I}\frac{\# (E_{n} \cap B_{i})}{n} 
\int_{0}^{1} \,  \Big( \frac{1}{\# (E_{n} \cap B_{i})}\sum_{e \, \in \, E_{n} \cap B_{i}} F_{} \Big)  \varphi_{i} . 
%
%
\end{equation}
\end{subequations}
Now, testing \textit{Problem} \eqref{Pblm n-Stage Weak Variational} with $\dfrac{1}{n}\, T_{n}\varphi$, due to the previous observations gives
\begin{multline}\label{Eq Testing on the n-stage}
\sum_{i\, = \, 1}^{I}\frac{\# (E_{n} \cap B_{i})}{n} \, K_{i}
\int_{0}^{1} \, \del \Big( \frac{1}{\# (E_{n} \cap B_{i})}\sum_{e \, \in \, E_{n} \cap B_{i}} \pne \Big)\, \del \varphi_{i} \\
=
\sum_{i\, = \, 1}^{I}\frac{\# (E_{n} \cap B_{i})}{n} 
\int_{0}^{1} \,  \Big( \frac{1}{\# (E_{n} \cap B_{i})}\sum_{e \, \in \, E_{n} \cap B_{i}} F_{} \Big)  \varphi_{i} 
+ \frac{\hn}{n}\, \varphi(0).
\end{multline}
Due to \textit{Lemma} \ref{Th Boundedness on Each Choice}-(ii) there must exist a subsequence $\{n_{k}:k\in \N \}$ and a collection $\{\xi_{i}: 1\leq i\leq I\} \subseteq H^{2}(0,1)$ such that 
\begin{equation*}
\frac{1}{\# (E_{n_{k}} \cap B_{i})}\sum_{e \, \in \, E_{n_{k}} \cap B_{i}} p^{n_{k}}_{e}  \xrightarrow[k\,\rightarrow\, \infty]{} \xi_{i} \quad \text{weakly in}\; H^{2}(0,1)\, \text{and strongly in} \; H^{1}(0,1)  , \,  \text{for all}\, 1\leq i \leq I .
\end{equation*}
On the other hand, due to \textit{Hypothesis} \ref{Hyp Forcing Term Cesaro Means Assumptions}-(ii) the integrand of the right hand side in the identity \eqref{Eq Testing on the n-stage} is convergent for all $i\in \{1,\ldots, I \}$. Due to \textit{Hypothesis} \ref{Hyp Forcing Term Cesaro Means Assumptions}-(i) the sequences $\{ \frac{\#(E_{n} \cap B_{i})}{n}: n\in \N\}$ are also convergent for all $i\in \{1, \ldots, I \}$. Then, taking the equality \eqref{Eq Testing on the n-stage} for $n_{k}$ and letting $k\rightarrow \infty$ gives
\begin{equation*} 
\sum_{i\, = \, 1}^{I}s_{i}\, K_{i}
\int_{0}^{1} \, \del \xi_{i}\, \del \varphi_{i} 
=
\sum_{i\, = \, 1}^{I}s_{i}
\int_{0}^{1} \,  \Fi \,  \varphi_{i} 
+ \h\, \varphi(0).
\end{equation*}
Notice that since $p^{n_{k}}\in H_{0}^{1}(G_{n_{k}})$ then 
\begin{equation}\label{Eq limit statement}
\frac{1}{\# (E_{n_{k}} \cap B_{i})}\sum_{e \, \in \, E_{n_{k}} \cap B_{i}} p^{n_{k}}_{e} (0)
= \frac{1}{\# (E_{n_{k}} \cap B_{j})}\sum_{e \, \in \, E_{n_{k}} \cap B_{j}} p^{n_{k}}_{e} (0) \,, \quad
\forall \, i, j \in\{1, \ldots, I\} .
\end{equation}
In particular $\xi_{i}(0) = \xi_{j}(0)$; consequently the function $\eta\in H_{0}^{1}(\G)$ such that $\eta_{i} = \xi_{i}$ is well-defined. Moreover, the identity \eqref{Eq limit statement} above is equivalent to 
\begin{equation*} 
\sum_{i \, = \, 1}^{I} \int_{\sigma_{i}} s_{i} K_{i} \, \deli \eta \; \deli \varphi = 
\sum_{i \, = \, 1}^{I}  \int_{\sigma_{i}} s_{i} \Fi\,  \varphi
+ \h(0)\,  \varphi(0) \, .
\end{equation*}
Since the variational statement above holds for any $\varphi \in H_{0}^{1}(\G)$ and $\eta \in H_{0}^{1}(\G)$, due to the previous part it follows that $\eta \equiv \p$. Finally, the whole sequence $\{\pn: n\in \N\}$ satisfies \eqref{Stmt Strong Convergence to Limit Solutions} because, for every subsequence $\{p^{n_{j}}: j\in \N \}$ there exists yet another subsequence $\{p^{n_{j_{\ell}}}: \ell\in \N \}$ satisfying the convergence \textit{Statement} \eqref{Stmt Strong Convergence to Limit Solutions}. This concludes the second part. 

\item Both conclusions follow immediately from the previous part and the uniform convergence \textit{Statement} \eqref{Eq H^1 Uniform Convergence} shown in \textit{Theorem} \ref{Th Bondedness of the function}.
\qed
\end{enumerate}
\end{proof}
\begin{remark}[Probabilistic Flexibilities of the Results]\label{Rem Probabilistic Flexibilities}
Consider the following random variables
\begin{enumerate}[(i)]
\item Let $X: E\rightarrow (0, \infty)$ be a random variable of finite range $\{K_{i}:1\leq i \leq I\}$ and such that $\Exp[X = K_{i}] = s_{i}$ for all $1\leq i \leq I$. Notice that due to the Law of Large Numbers, with probability one it holds that
\begin{equation}\label{Eq Probabilistic Diffusion Coefficient of Finite Range}
\frac{1}{n}\sum_{e\, \in \, E_{n} \cap B_{i}} X(e) \xrightarrow[n\,\rightarrow \,\infty]{} s_{i} \, K_{i}.
\end{equation}

\item Let $Y: E\rightarrow L^{2}(0,1)$ be a random variable such that $\sup_{e\,\in \, E}\Vert Y(e) \Vert_{L^{2}(e)} < +\infty$ and  such that
\begin{align}\label{Eq Random Forcing Vector}
& \frac{1}{\#(E_{n}\cap B_{i})}\sum_{e\, \in \, E_{n} \cap B_{i}} Y(e) \xrightarrow[n\,\rightarrow \,\infty]{} \F_{i} \, ,&
& \forall \; 1\leq i \leq I.
\end{align}
\end{enumerate}
Therefore, the results of \textit{Theorem} \ref{Th The Upscaled Problem} hold, when replacing $K$ by $X$ or $F$ by $Y$ or when making both substitutions at the same time.
\end{remark}
\section{The Examples}\label{Sec Numerical Experiments}
%
%
%
%
In this section we present two types of numerical experiments. The first type are verification examples, supporting our homogenization conclusions for a problem whose asymptotic behavior is known exactly.  The second type are of exploratory nature, in order to gain further understanding of the phenomenon's upscaled behavior. The experiments are executed in a MATLAB code using the Finite Element Method (FEM); it is an adaptation of the code \textbf{fem1d.m} \cite{PeszynskaFEM}.
%
%
\subsection{General Setting}
%
%
For the sake of simplicity the vertices of the graph are given by $v_{\ell} \defining (\cos \ell, \sin \ell)\in S^{1}$, as it is known that $\{v_{\ell}: \ell\in \N \}$ is equidistributed in $S^{1}$ (see \cite{SteinShakarchi}). The diffusion coefficient hits only two possible values one and two. Two types of coefficients will be analyzed, $K_{d}, K_{p}$ a deterministic and a probabilistic one respectively. They satisfy 
\begin{subequations}\label{Def Experimental Difussion Coefficients}
\begin{align}\label{Def Deterministic Experimental Difussion Coefficient}
& K_{d}: \Omega_{G}\rightarrow \{1, 2 \} \, , & 
& K_{d}(v_{\ell}v_{0})  \defining \begin{cases}
1  , & \ell \equiv 0 \mod 3, \\
2 , & \ell \not\equiv 0 \mod 3.
\end{cases} 
\end{align}
\begin{align}\label{Def Probabilistic Experimental Difussion Coefficient}
& K_{p}: \Omega_{G}\rightarrow \{1, 2 \} \, , &
&  \Exp[K_{p} = 1] = \frac{1}{3} \, , \;  \Exp[K_{p} = 2] = \frac{2}{3} .
\end{align} 
\end{subequations}
In our experiments the asymptotic analysis is performed for $K_{p}$ being a fixed realization of a random sequence of length 1000, generated with the binomial distribution $1/3, 2/3$. Since $\# K_{d}(E) = \# K_{p}(E) = 2$ it follows that the upscaled graph $\G$ has only three vertices and two edges namely $w_{1} = (1, 0)$, $w_{2} = (-1, 0)$, $w_{0} = (0, 0)$ and $\sigma_{1} = w_{1}w_{0} $, $\sigma_{2} = w_{2}w_{0} $. Also, define the domains 
\begin{align*}
& \Omega^{1}_{G} \defining \bigcup \big\{v_{\ell}v_{0}: \ell\in\N\, , \;K(v_{\ell}v_{0}) = 1\big\} \, , &
& \Omega^{2}_{G} \defining \bigcup \big\{v_{\ell}v_{0}: \ell\in\N\, , \; K(v_{\ell}v_{0}) = 2 \big\} \, .
\end{align*}
Where $K = K_{d}$ or $K = K_{p}$ depending on the probabilistic or deterministic context. Additionally, we define
\begin{align*}
& \phone \defining \frac{1}{\#(E_{n} \cap B_{1})} \sum_{e\,\in\, E_{n} \cap B_{1}} \pne &
& \phtwo \defining \frac{1}{\#(E_{n} \cap B_{2})} \sum_{e\,\in\, E_{n} \cap B_{2}} \pne \, .
\end{align*}
For all the examples we use the forcing terms $\hn = 0$ for every $n\in \N$. The FEM approximation is done with $100$ elements per edge with uniform grid. For each example we present two graphics for values of $n$ chosen from $\{10, 20, 50, 100, 500, 1000\}$, based on optical neatness. For visual purposes in all the cases the edges are colored with red if $K(e) = 1$ or blue if $K(e) = 2$. Also, for displaying purposes, in the cases $n \in \{10, 20\}$ the edges $v_{\ell}v_{0}$ are labeled with ``$\ell\,$" for identification, however for $n\in\{50, 100, 500, 1000\}$ the labels were removed because they overload the image. 
%
%
%
\subsection{Verification Examples}
%
%
%
%
\begin{example}[A Riemann Integrable Forcing Term]\label{Ex. Basic Example}
We begin our examples with the most familiar context as discussed in \textit{Remark} \ref{Rem Riemann Integrability function}. Define $F: \Omega\rightarrow \R$ by $F(t \cos \ell, t\sin \ell)\defining \pi^{2}\,  \sin (\pi t) \cos(\ell)$. Since both sequences $\{v_{\ell}: \ell \in \N, \ell \equiv 0 \mod 3 \}$ and $\{v_{\ell}: \ell \in \N, \ell \not\equiv 0 \mod 3 \}$ are equidistributed, \textit{Weyl's Theorem} \ref{Th Weyl's Theorem} implies 
\begin{equation*}
\F_{1} = 
\mrad[F\vert_{\Omega_{G}^{1}}]=
\F_{2} = 
\mrad[F\vert_{\Omega_{G}^{2}}] = 
\mrad[F] \equiv 0 .
\end{equation*}
Here $\F_{1}$, $\F_{2}$ are the limits defined in \textit{Hypothesis} \ref{Hyp Forcing Term Cesaro Means Assumptions}-(ii). For this case the exact solution of the upscaled \textit{Problem} \eqref{Stmt limit problem} is given by $\p \defining \p \, \ind_{\sigma_{1}} + \p \, \ind_{\sigma_{2}} \in H_{0}^{1}(\G)$, with $\p_{1}(t) = \p_{2}(t) = 0$. For the diffusion coefficient we use the deterministic one, $K_{d}$ defined in \eqref{Def Deterministic Experimental Difussion Coefficient}. The following table summarizes the convergence behavior.
%
%
\begin{table*}[h!]
\centerline{\textit{Example} \ref{Ex. Basic Example} : Convergence Table, $K = K_{d}$.}\vspace{5pt}
\def\arraystretch{1.4}
\begin{center}
\begin{tabular}{ c c c c c }
    \hline
$n$  & $\Vert  \phone- \p_{1}  \Vert_{L^{2}(_{e_{1}})} $ & $\Vert  \phtwo- \p_{2}  \Vert_{L^{2}(e_{1})}$ & $\Vert  \phone- \p_{1}  \Vert_{H^{1}_{0}(e_{2})}$ & $\Vert  \phtwo- \p_{2}  \Vert_{H^{1}_{0}(e_{2})}$ \\ 
    \toprule
10 &  0.3526 & 0.1717 & 0.8232 & 0.3216  \\
20 & 0.0180  & 0.0448 & 0.0900 &  0.0889     \\
100 & 0.0160  & 0.0059 & 0.0395 &  0.0116    \\
1000  & $5.8352\times 10^{-4}$  & $8.27772\times 10^{-4}$ & 0.0012 &  0.0016   \\ 
    \hline
\end{tabular}
\end{center}
\end{table*}
\begin{figure}[h] 
\vspace{-1.3cm}
        \centering
        \begin{subfigure}[Solution $p^{10}$, $n = 10$.]
                {\resizebox{7cm}{7cm}
                {\includegraphics{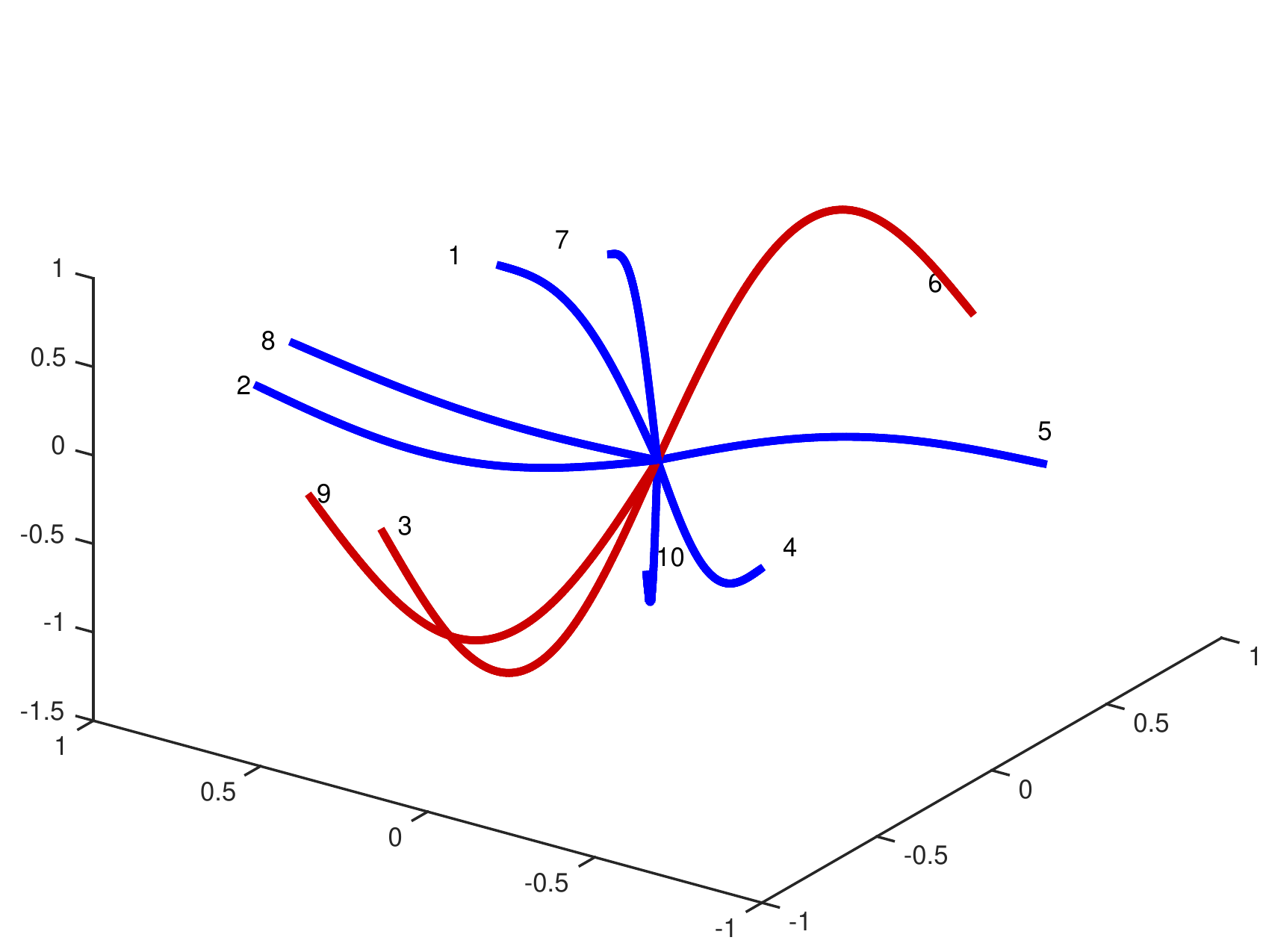} } }
        \end{subfigure} 
\qquad
        \begin{subfigure}[Solution $p^{100}$, $n = 100$.]
                {\resizebox{7cm}{7cm}
{\includegraphics{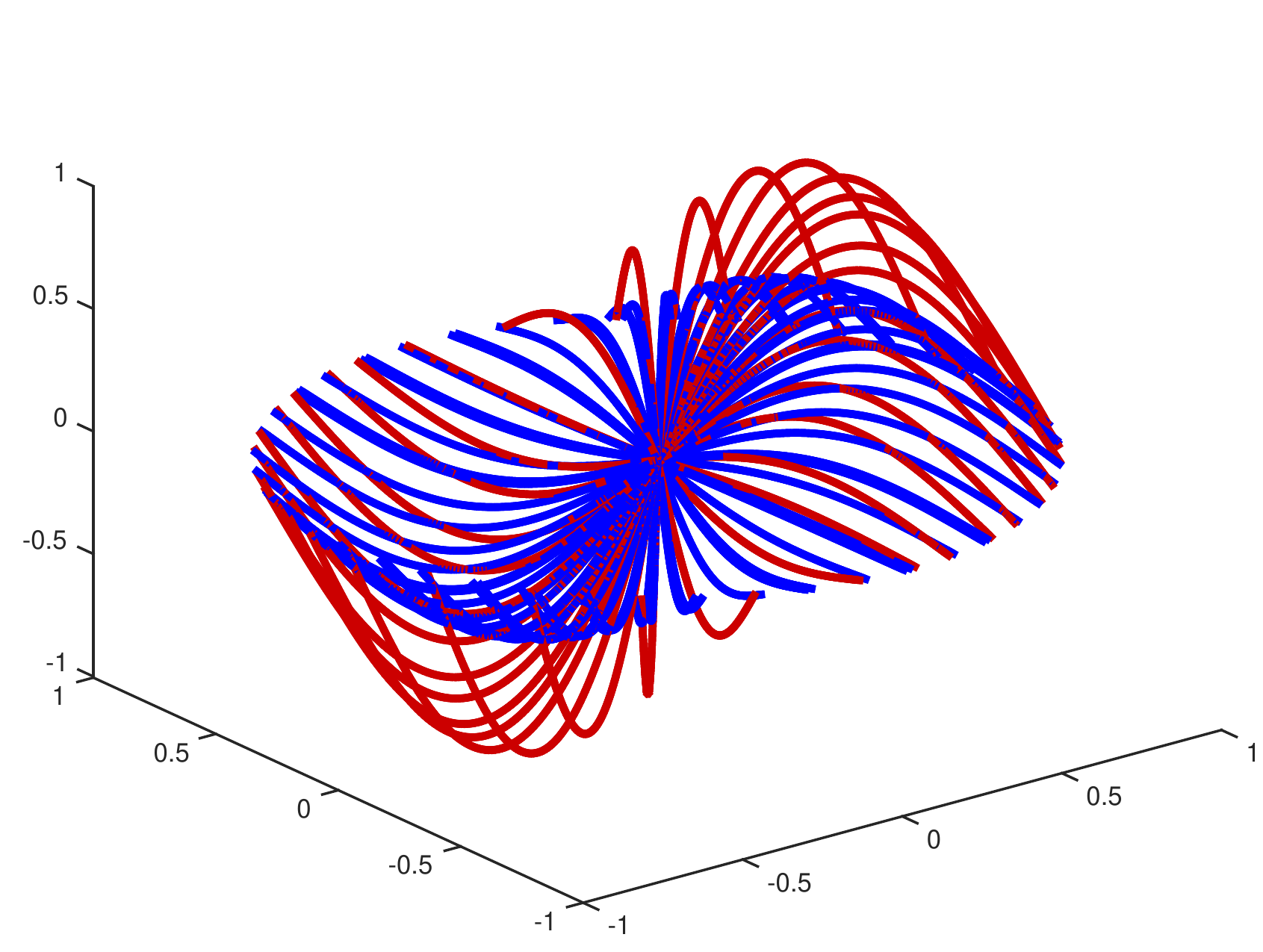} } }
        \end{subfigure} 
%
%
\caption{\textbf{Solutions \textit{Example} \ref{Ex. Basic Example}}. Diffusion coefficient $K_{d}$, see \eqref{Def Deterministic Experimental Difussion Coefficient}. The solutions depicted in figures (a) and (b) on the edges $v_{\ell}v_{0}$ are colored with red if $K_{d}(v_{\ell}v_{0}) = 1$ (i.e., $\ell \equiv 0 \mod 3$), or blue if $K_{d}(v_{\ell}v_{0}) = 2$ (i.e., $\ell \not\equiv 0 \mod 3$). Forcing term $F:\Omega\rightarrow \R$, $F(t \cos \ell, t\sin \ell)\defining \pi^{2}\,  \sin (\pi t) \cos(\ell)$. \label{Fig Graphs First Example}} 
\end{figure}
\end{example}
\begin{example}[Probabilistic Flexibilities for \textit{Example} \ref{Ex. Basic Example}]\label{Ex. Probabilistic Flexibilities}
This experiment follows the observations of \textit{Remark} \ref{Rem Probabilistic Flexibilities}. In this case take $X\defining K_{p}$, defined in \eqref{Def Probabilistic Experimental Difussion Coefficient}. Let $Z: \N\rightarrow [-100, 100]$ be a random variable with uniform distribution and define $Y: \Omega_{G}\rightarrow \R$ by $Y(t \cos \ell, t\sin \ell)\defining \pi^{2}\,  \sin (\pi t) \cos(\ell) + Z(\ell)$. It is direct to see that $X$ and $Y$ satisfy \textit{Hypothesis} \ref{Hyp Forcing Term Cesaro Means Assumptions} and due to the Law of Large Numbers, they also satisfy \eqref{Eq Probabilistic Diffusion Coefficient of Finite Range} and \eqref{Eq Random Forcing Vector} respectively. Therefore
\begin{equation*}
\thickbar{Y}_{1} = 
\mrad[F\vert_{\Omega_{G}^{1}}]=
\thickbar{Y}_{2} = 
\mrad[F\vert_{\Omega_{G}^{2}}] = 
\mrad[F]  = \p_{1} = \p_{2} = 0.
\end{equation*}
The following table is the summary for a fixed realization of $X$ (to keep the edge coloring consistent) and different realizations of $Y$ on each stage. Convergence is observed, as expected it is slower than in the previous case. This would also occur for different realizations of $X$ and $Y$ simultaneously.  
%
%
\begin{table*}[h!]
\centerline{\textit{Example} \ref{Ex. Probabilistic Flexibilities}: Convergence Table, $K = K_{p}$.}\vspace{5pt}
\def\arraystretch{1.4}
\begin{center}
\begin{tabular}{ c c c c c }
    \hline
$n$  & $\Vert  \phone- \p_{1}  \Vert_{L^{2}(e_{1})} $ & $\Vert  \phtwo- \p_{2}  \Vert_{L^{2}(e_{1})}$ & $\Vert  \phone- \p_{1}  \Vert_{H^{1}_{0}(e_{2})}$ & $\Vert  \phtwo- \p_{2}  \Vert_{H^{1}_{0}(e_{2})}$ \\ 
    \toprule
10 &  0.5534 & 0.0938 & 1.6629 & 0.5381  \\
20 & 0.0965  & 0.1594 & 0.5186 &  0.3761    \\
100 & 0.0653 & 0.1322 & 0.3809 &  0.2569    \\
1000  & 0.0201  & 0.0302 & 0.0658 &  0.0597   \\ 
    \hline
\end{tabular}
\end{center}
\end{table*}
\begin{figure}[h] 
\vspace{-1.2cm}
        \centering
        \begin{subfigure}[Solution $p^{20}$, $n = 20$.]
                {\resizebox{7cm}{7cm}
                {\includegraphics{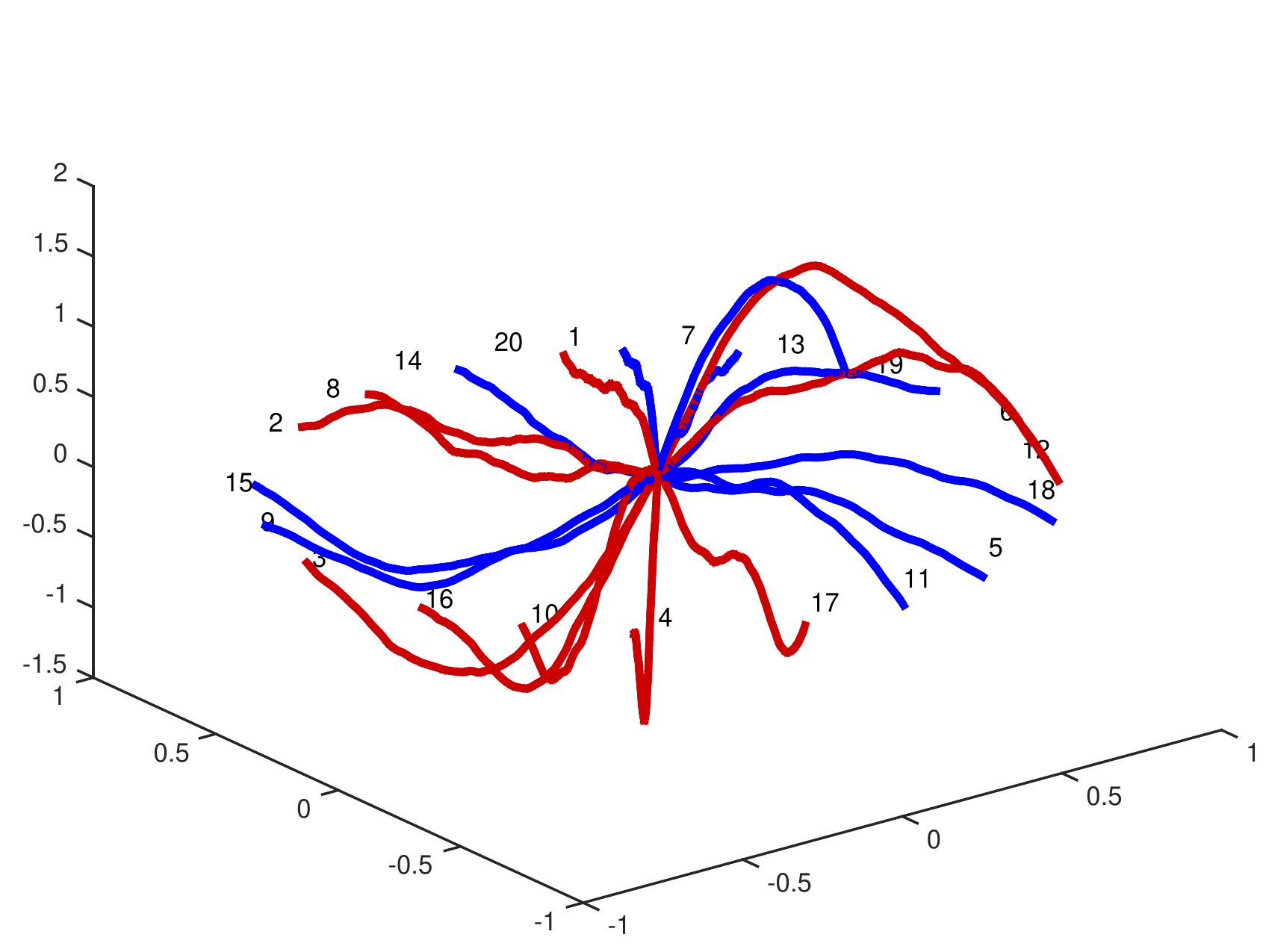} } }
        \end{subfigure} 
        \qquad
        ~ 
          \begin{subfigure}[Solution $p^{50}$, $n = 50$.]
                {\resizebox{7cm}{7cm}
                {\includegraphics{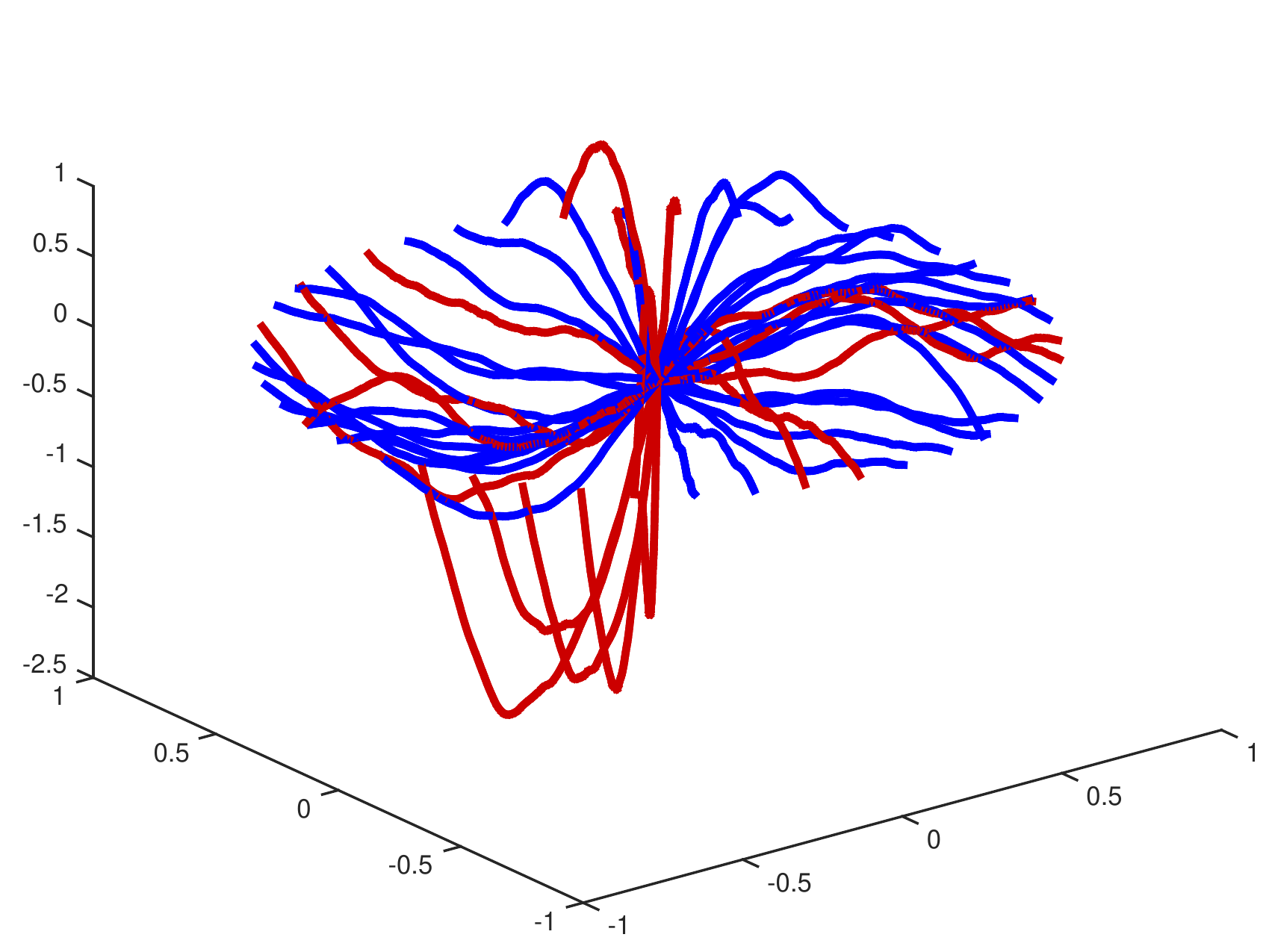} } }                
        \end{subfigure} 
%
%
%
\caption{\textbf{Solutions of \textit{Example} \ref{Ex. Probabilistic Flexibilities}} Fixed realization of diffusion coefficient $K_{p}$, see \eqref{Def Probabilistic Experimental Difussion Coefficient}.
Forcing term $Y:\Omega_{G}\rightarrow \R$, $Y(t \cos \ell, t\sin \ell)\defining \pi^{2}\,  \sin (\pi t) \cos(\ell) + Z(\ell)$, with $Z: \N\rightarrow [-100, 100]$, random variable and $Z\sim \text{uniformly}$. Different realizations for $Y$ on each stage. The solutions depicted in figures (a) and (b) on the edges $v_{\ell}v_{0}$ are colored with red if $K_{p}(v_{\ell}v_{0}) = 1$ ($\Exp[K_{p} = 1] = \frac{1}{3}$), or blue colored if $K_{p}(v_{\ell}v_{0}) = 2$ ($\Exp[K_{p} = 2] = \frac{2}{3}$). \label{Fig Graphs Second Example} }
\end{figure}
\end{example}
\begin{example}[A non-Riemann Integrable Forcing Term]\label{Ex. non-Riemann Integrable Forcing Terms}
For our final theoretical example we use a non-Riemann Integrable forcing term. Moreover, the following function is highly oscillatory inside each subdomain $\Omega_{G}^{1}$ and ${\Omega_{G}^{2}}$, and it can not be seen as Riemann integrable when restricted to any of the sub-domains. Let $F: \Omega_{G}\rightarrow \R$ be defined by
\begin{equation}\label{Eq non-Riemann Integrable Forcing Terms}
F(t \cos \ell, t\sin \ell)\defining \begin{cases}
4\pi^{2}\,  \sin (2\pi t) + (-1)^{\lfloor \frac{\ell}{6} \rfloor}\times 10 \times (\ell - \big\lfloor \frac{\ell}{2\pi} \big\rfloor) , & \ell \equiv 0 \mod 3, \\
\pi^{2}\,  \sin (\pi t) + (-1)^{\lfloor \frac{\ell}{6} \rfloor}\times 10 \times (\ell - \big\lfloor \frac{\ell}{2\pi} \big\rfloor)  , & \ell \not\equiv 0 \mod 3 .
\end{cases}
\end{equation}
On one hand, both sequences $\{v_{\ell}: \ell \in \N, \ell \equiv  0 \mod 3 \}$ and $\{v_{\ell}: \ell \in \N, \ell \not\equiv 0 \mod 3 \}$ are equidistributed. On the other hand both parts of the forcing term, the radial and the angular are Ces\`aro convergent on each $\Omega_{G}^{i}$ for $i = 1, 2$. The Ces\`aro average of the angular summand is zero on $\Omega_{G}^{i}$ for $i = 1, 2$. In contrast, the radial summand can be seen as Riemann integrable separately on each $\Omega_{G}^{i}$ for $i = 1, 2$, therefore, due to \textit{Weyl's Theorem} \ref{Th Weyl's Theorem} its Ces\`aro average is given by $\F_{1} = \mrad[F\vert_{\Omega_{G}^{1}}]$ and $\F_{2} = \mrad[F\vert_{\Omega_{G}^{2}}]$; more explicitly, 
%
%
\begin{align}\label{Eq Averaged non-Riemann Integrable Forcing Terms}
& \F_{1}(t) =
(2\pi)^{2}\sin (2\pi t) \, ,&
& \F_{2}(t) = 
\pi^{2}\, \sin (\pi t)  .
\end{align}
For this case the exact solution $\p = \p \, \ind_{\sigma_{1}} + \p \, \ind_{\sigma_{2}} \in H_{0}^{1}(-1, 1)$ of the upscaled \textit{Problem} \eqref{Stmt limit problem} is given by
\begin{align}\label{Eq Averaged Solutions non-Riemann Integrable Forcing Term}
& \p_{1}(t) =
\sin (2\pi t) \, ,&
& \p_{2}(t) = 
\frac{1}{2} \, \sin (\pi t)  .
\end{align}
We summarize the convergence behavior in the table below.
%
%
\begin{table*}[h!]
\centerline{\textit{Example} \ref{Ex. non-Riemann Integrable Forcing Terms}: Convergence Table, $K = K_{d}$.}\vspace{5pt}
\def\arraystretch{1.4}
\begin{center}
\begin{tabular}{ c c c c c }
    \hline
$n$  & $\Vert  \phone- \p_{1}  \Vert_{L^{2}(e_{1})} $ & $\Vert  \phtwo- \p_{2}  \Vert_{L^{2}(e_{1})}$ & $\Vert  \phone- \p_{1}  \Vert_{H^{1}_{0}(e_{2})}$ & $\Vert  \phtwo- \p_{2}  \Vert_{H^{1}_{0}(e_{2})}$ \\ 
    \toprule
10 &  1.6392 & 0.4900 & 5.7447 & 1.3210  \\
20 & 0.4127  & 0.9305 & 1.8930 &  1.7782    \\
100 & 0.2125 & 0.3312 & 0.4986 &  0.6275    \\
1000  & 0.0138  & 0.0189 & 0.0852 &  0.0371   \\ 
    \hline
\end{tabular}
\end{center}
\end{table*}
\begin{figure}[h] 
\vspace{-1.3cm}
        \centering
        ~ 
          \begin{subfigure}[Solution $p^{20}$, $n = 20$.]
                {\resizebox{7cm}{7cm}
                {\includegraphics{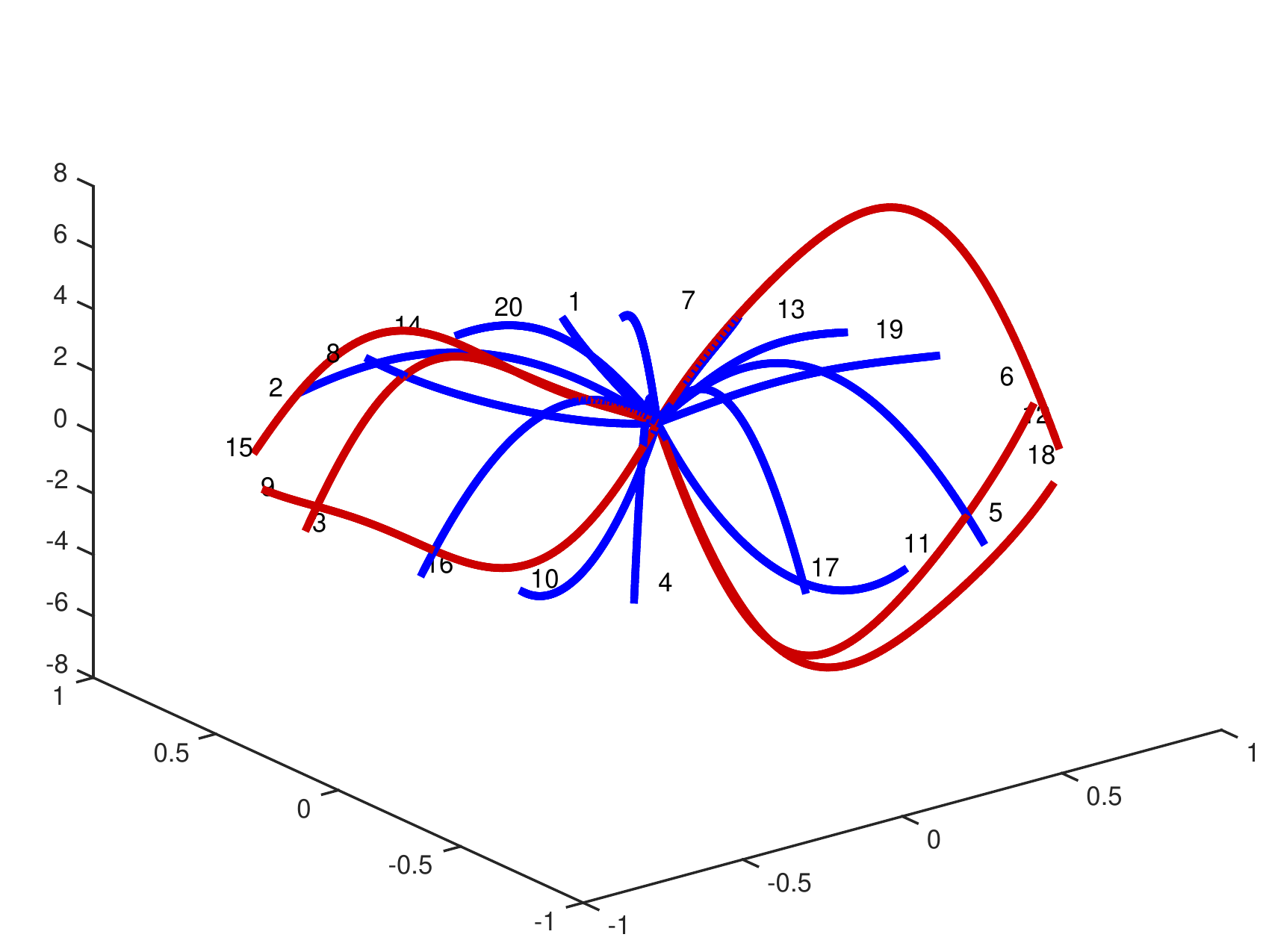} } }                
        \end{subfigure} 
\qquad
        \begin{subfigure}[Solution $p^{50}$, $n = 50$.]
                {\resizebox{7cm}{7cm}
{\includegraphics{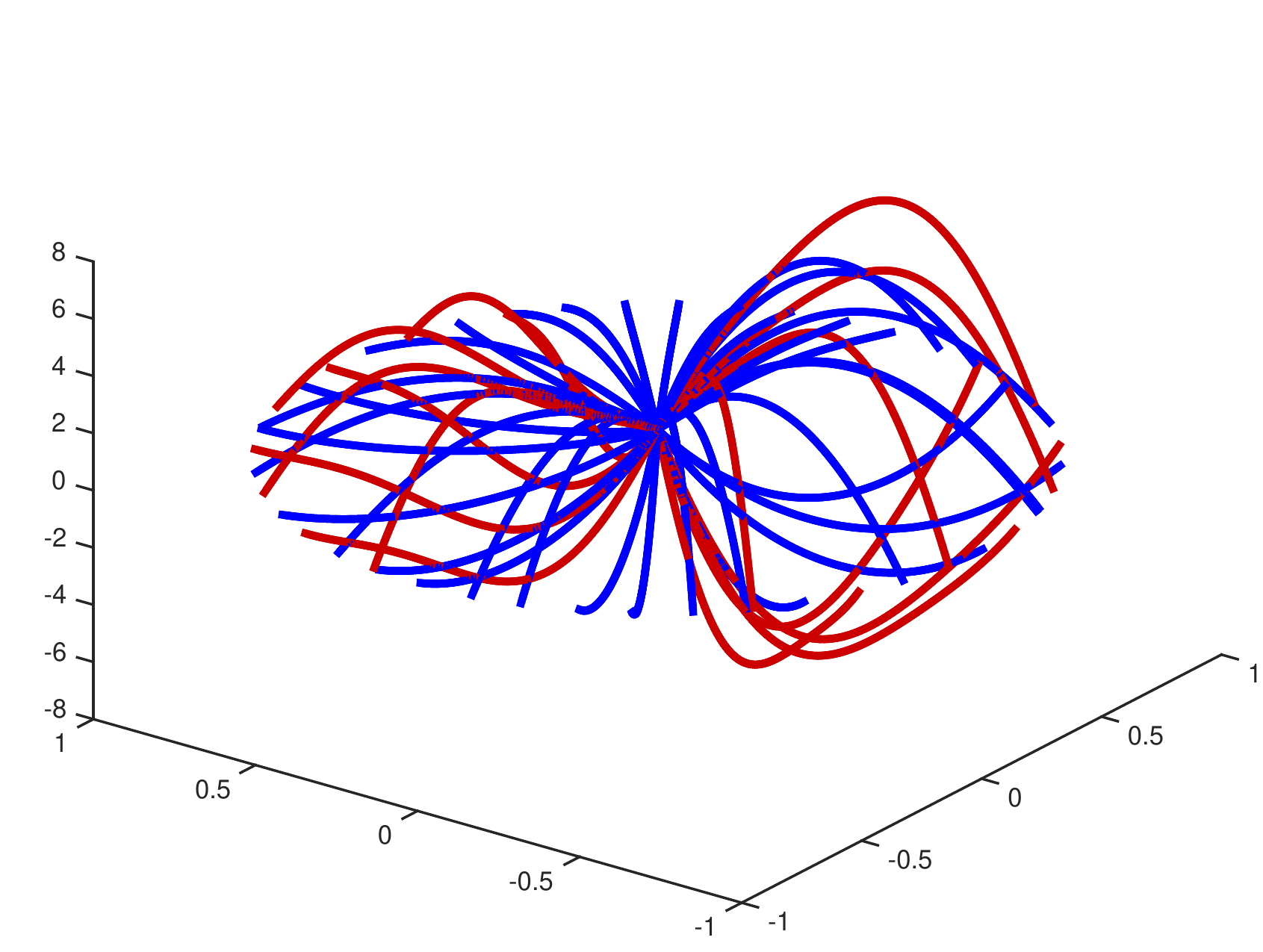} } }
        \end{subfigure} 
%
%
\caption{\textbf{Solutions of \textit{Example} \ref{Ex. non-Riemann Integrable Forcing Terms}} Diffusion coefficient $K_{d}$, see \eqref{Def Deterministic Experimental Difussion Coefficient}. The solutions depicted in figures (a) and (b) on the edges $v_{\ell}v_{0}$ are colored with red if $K_{d}(v_{\ell}v_{0}) = 1$ (i.e., $\ell \equiv 0 \mod 3$), or blue if $K_{d}(v_{\ell}v_{0}) = 2$ (i.e., $\ell \not\equiv 0 \mod 3$). Forcing term $F:\Omega_{G}\rightarrow \R$ see \eqref{Eq non-Riemann Integrable Forcing Terms}.   \label{Fig Graphs Third Example} }
\end{figure}
\end{example}
%
%
%
\subsection{Numerical Experimentation Examples}
%
%
%
%
In this section we present two examples, breaking different hypotheses of those required in the theoretical analysis discussed above. As there is not a known exact solution, we follow Cauchy's convergence criterion for the sequences $\{\p^{ \, n}_{i}: n\in \N  \}$ with $i = 1, 2$. However, we do not sample only points but intervals of observation and report the averages of the observed data. More specifically
\begin{align*}
& \epsilon^{n}_{i}\defining \frac{1}{10}\sum_{j = n - 4}^{n + 5} \Vert \p^{ \, j}_{i} - \p^{ \, j - 1}_{i} \Vert_{L^{2}(e_{i})} &   &\text{and} 
& \delta^{n}_{i}\defining\frac{1}{10}\sum_{j = n - 4}^{n + 5} \Vert \p^{ \, j}_{i} - \p^{ \, j - 1}_{i} \Vert_{H^{1}(e_{i})} , \\
& & & \text{for} 
&  i = 1, 2 \, , \;  
n = 10, 20, 100, 1000.
\end{align*}
\begin{example}[A Locally Unbounded Forcing Term]\label{Ex. Unbounded Example}
For our experiment we use a variation of \textit{Example} \ref{Ex. non-Riemann Integrable Forcing Terms}, keeping the well-behaved radial part but adding an unbounded angular part, which is known to be Ces\`aro convergent to zero. Consider the forcing term $F: \Omega_{G}\rightarrow \R$ defined by
\begin{equation}\label{Eq Unbounded Forcing Terms}
F(t \cos \ell, t\sin \ell)\defining \begin{cases}
4\pi^{2}\,  \sin (2\pi t) + (-1)^{\ell} \sqrt{\ell} \,  , & \ell \equiv 0 \mod 3, \\
\pi^{2}\,  \sin (\pi t) + (-1)^{\ell} \sqrt{\ell} \,  , & \ell \not\equiv 0 \mod 3 .
\end{cases}
\end{equation}
Clearly, $\sup_{e\in E} \Vert F \Vert_{L^{2}(e)} = \infty$ i.e.,  \textit{Hypothesis} \ref{Hyp Forcing Terms and Permeability}-(i) is not satisfied. 
It is not hard to adjust the techniques presented in \textit{Section} \ref{Sec Estimates and Cesaro Convergence} to this case, when the forcing term is Ces\`aro convergent without satisfying the condition $\sup_{e\in E} \Vert F \Vert_{L^{2}(e)} = \infty$; however the properties of edgewise uniform convergence of \textit{Section} \ref{Estimates and Edgewise Convergence Statements} can not be concluded. Consequently, we observe the following convergence behavior.
%
%
\begin{table*}[h!]
\centerline{\textit{Example} \ref{Ex. Unbounded Example}: Convergence Table, $K = K_{d}$.}\vspace{5pt}
\def\arraystretch{1.2}
\setlength\tabcolsep{0.8cm}
\begin{center}
\begin{tabular}{ c c c c c }
    \hline
$n$  & $\epsilon^{n}_{1}$  & 
$\epsilon^{n}_{2}$ & 
$\delta^{n}_{1}$ & 
$\delta^{n}_{2}$ \\ 
    \toprule
10 &  0.1547 & 0.1578 & 2.7336 & 2.7338  \\
20 & 0.0618  & 0.0645 & 1.0734 &  1.0747    \\
100 & 0.0277 & 0.0224 & 0.3394 &  0.3320    \\
1000  & 0.0086  & 0.0065 & 0.0984 &  0.0955   \\ 
    \hline
\end{tabular}
\end{center}
\end{table*}
%
%
%
%
%
%

%
%
\begin{figure}[h] 
\vspace{-1.3cm}
        \centering
        \begin{subfigure}[Solution $p^{20}$, $n = 20$.]
                {\resizebox{7cm}{7cm}
                {\includegraphics{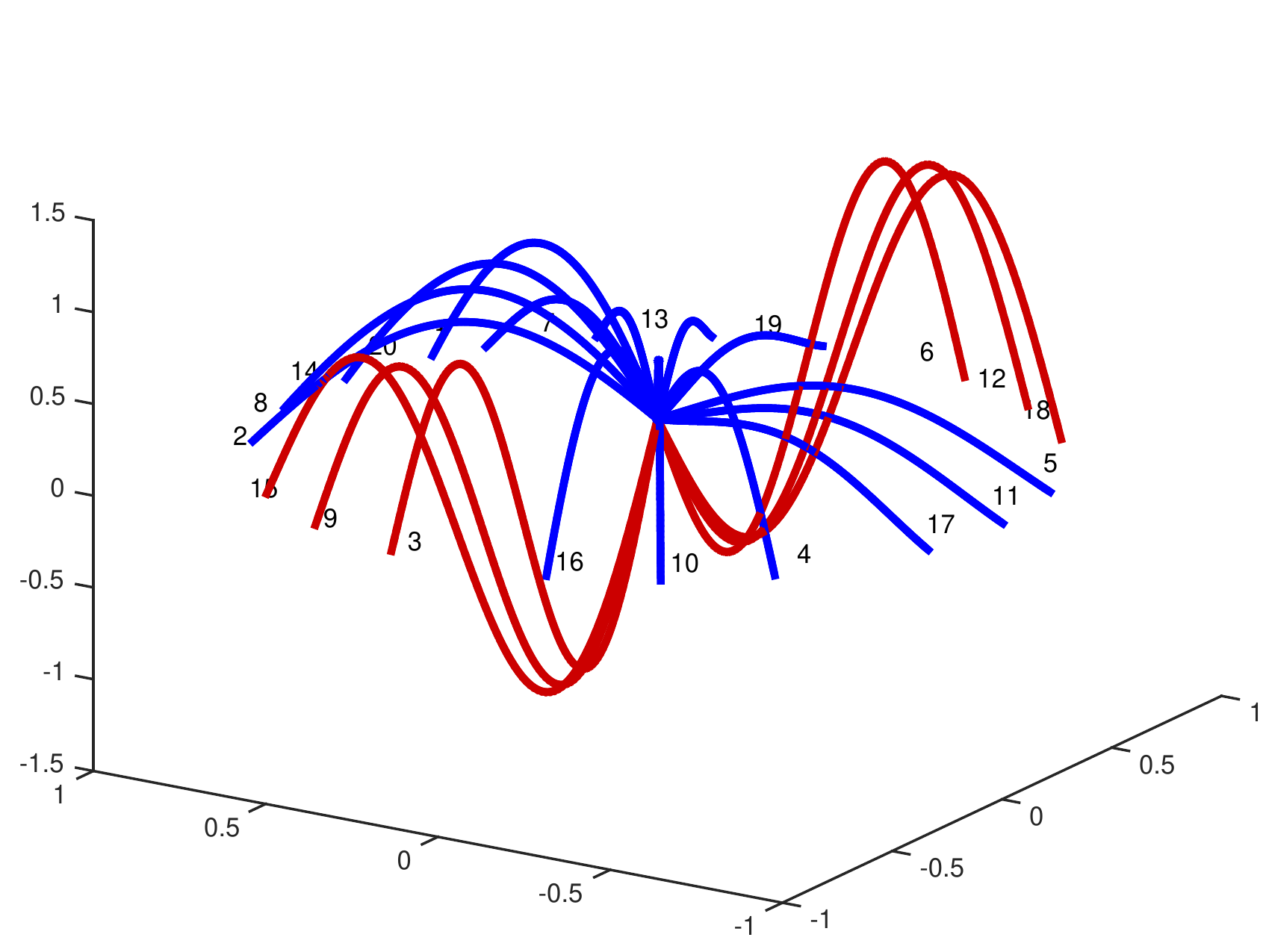} } }
        \end{subfigure} 
        \qquad
        ~ 
          \begin{subfigure}[Solution $p^{100}$, $n = 100$.]
                {\resizebox{7cm}{7cm}
                {\includegraphics{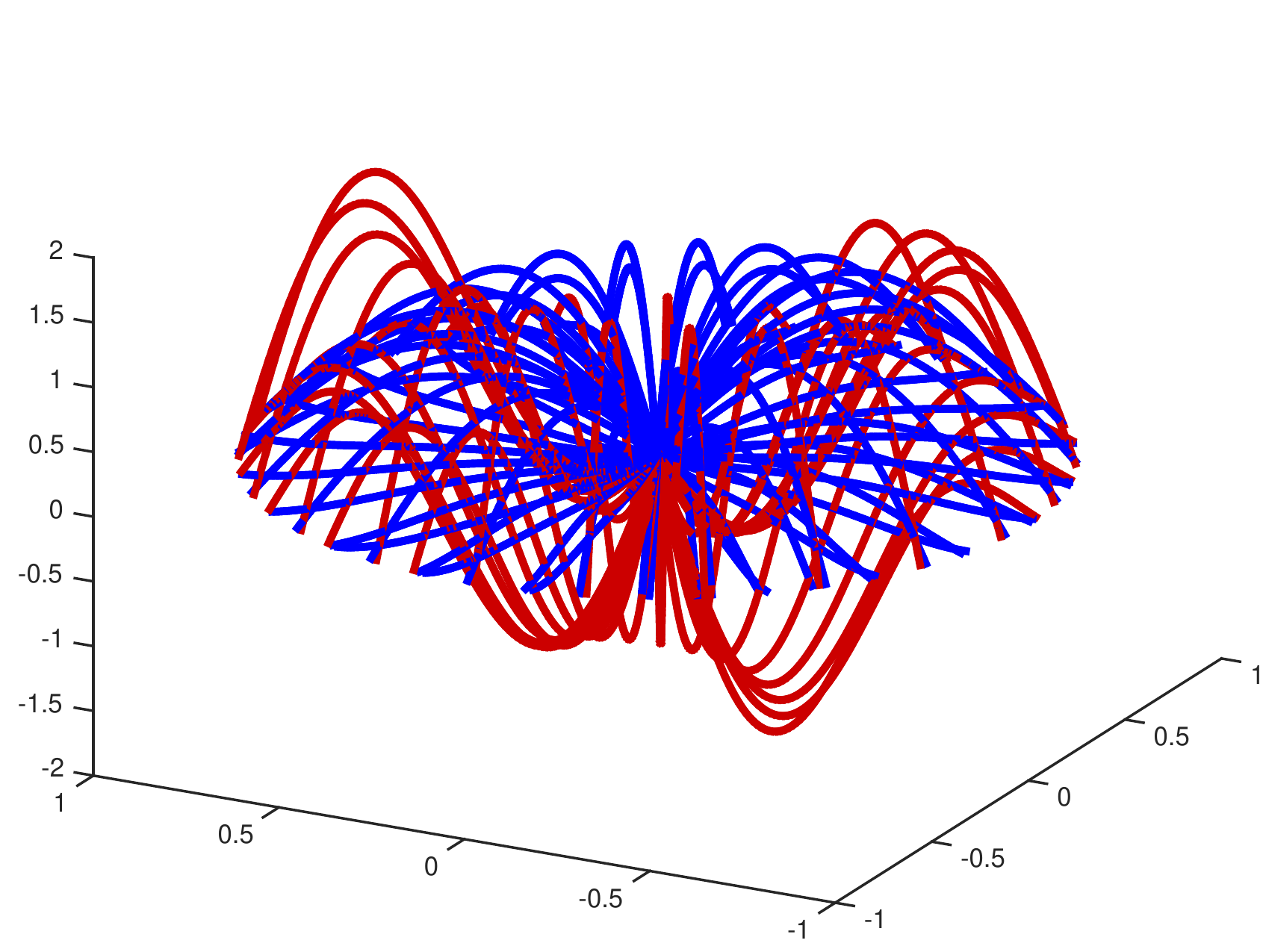} } }                
        \end{subfigure} 
%
%
%
\caption{\textbf{Solutions of \textit{Example} \ref{Ex. Unbounded Example}} Diffusion coefficient $K_{d}$, see \eqref{Def Deterministic Experimental Difussion Coefficient}. The solutions depicted in figures (a) and (b) on the edges $v_{\ell}v_{0}$ are colored with red if $K_{d}(v_{\ell}v_{0}) = 1$ (i.e., $\ell \equiv 0 \mod 3$), or blue if $K_{d}(v_{\ell}v_{0}) = 2$ (i.e., $\ell \not\equiv 0 \mod 3$). Forcing term $F:\Omega_{G}\rightarrow \R$ see \eqref{Eq Unbounded Forcing Terms}.  \label{Fig Graphs Fourth Example} }
\end{figure}
\end{example}
\begin{example}[A Forcing Term with Unbounded Frequency Modes]\label{Ex. Unbounded Frequency}
For our last experiment we use a variation of \textit{Example} \ref{Ex. non-Riemann Integrable Forcing Terms}, keeping it bounded, but introducing unbounded frequencies. Consider the forcing term $F: \Omega_{G}\rightarrow \R$ defined by
\begin{equation}\label{Eq Unbounded Frequency Forcing Terms}
F(t \cos \ell, t\sin \ell)\defining \begin{cases}
4\pi^{2}\,  \sin (2\pi t\cdot \ell) \,  , & \ell \equiv 0 \mod 3, \\
\pi^{2}\,  \sin (\pi t\cdot \ell) \,  , & \ell \not\equiv 0 \mod 3 .
\end{cases}
\end{equation}
Clearly, $F$ verifies \textit{Hypothesis} \ref{Hyp Forcing Terms and Permeability} , then \textit{Lemma} \ref{Th Estimates on v_0 = 0} implies edgewise uniform convergence of the solutions,
however \textit{Hypothesis}-(ii) \ref{Hyp Forcing Term Cesaro Means Assumptions} is not satisfied. Therefore, we observe that the whole sequence is not Cauchy, although it has Cauchy subsequences as the following table shows.
%
%
\begin{table*}[h!]
\centerline{\textit{Example} \ref{Ex. Unbounded Frequency}: Convergence Table, $K = K_{d}$.}\vspace{5pt}
\def\arraystretch{1.2}
\setlength\tabcolsep{0.8cm}
\begin{center}
\begin{tabular}{ c c c c c }
    \hline
$n$  & $\epsilon^{n}_{1}$  & 
$\epsilon^{n}_{2}$ & 
$\delta^{n}_{1}$ & 
$\delta^{n}_{2}$ \\ 
    \toprule
10 &  0.0264 & 0.0267 & 0.4157 & 0.3835  \\
20 & 0.0078  & 0.0089 & 0.1342 &  0.1327    \\
100 & 0.0004 & 0.0005 & 0.0077 &  0.0076    \\
500 & 0.00004 & 0.00004 & 0.00073 &  0.00072    \\
1000  & 0.00066  & 0.00049 & 0.0081 &  0.0078   \\ 
1200  & 0.00004  & 0.00005 & 0.000787 &  0.000786   \\ 
    \hline
\end{tabular}
\end{center}
\end{table*}
\begin{figure}[h] 
\vspace{-0.9cm}
        \centering
        ~ 
          \begin{subfigure}[Solution $p^{500}$, $n = 500$.]
                {\resizebox{7cm}{7cm}
                {\includegraphics{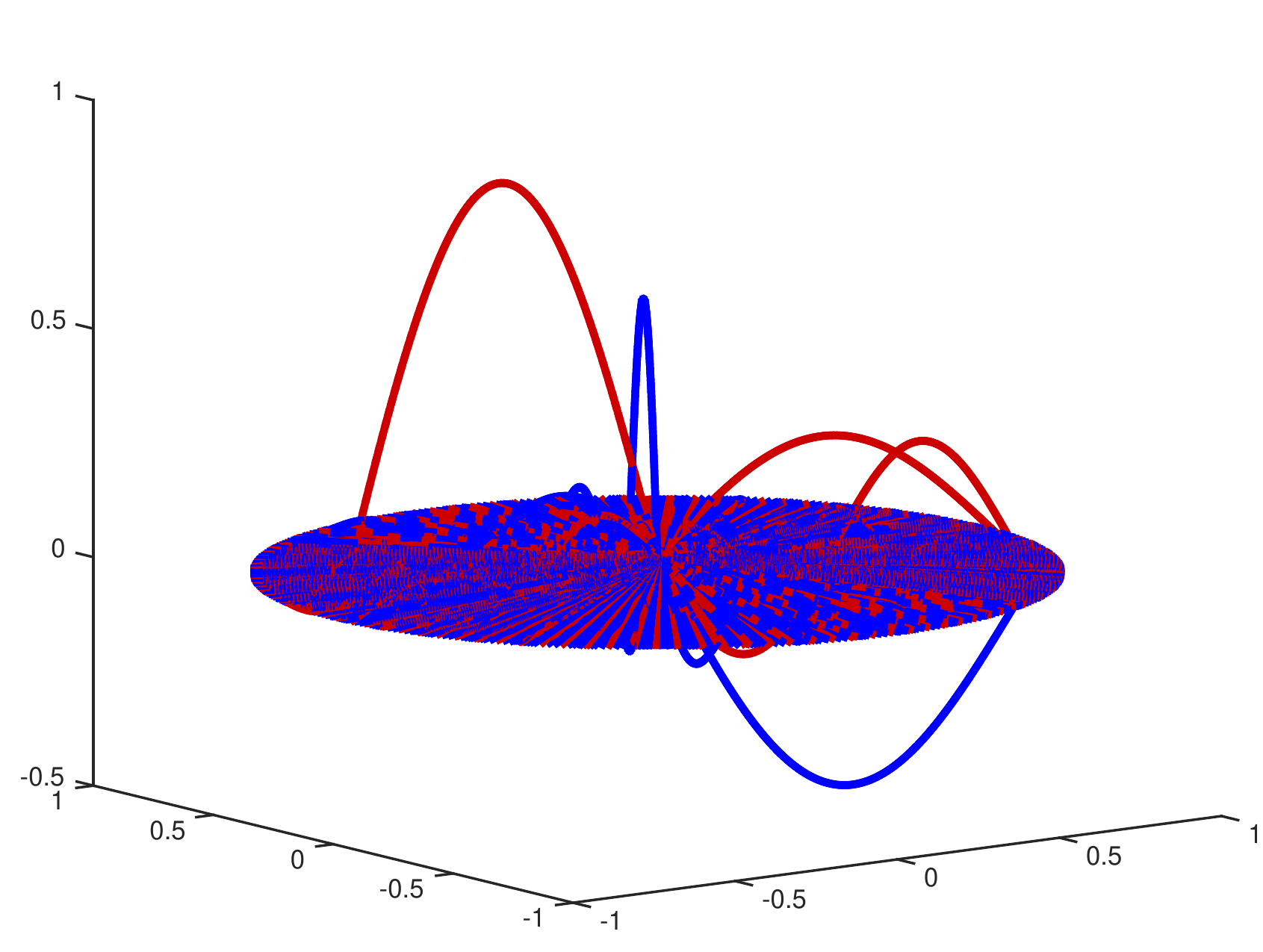} } }                
        \end{subfigure} 
\qquad
        \begin{subfigure}[Solution $p^{1000}$, $n = 1000$.]
                {\resizebox{7cm}{7cm}
{\includegraphics{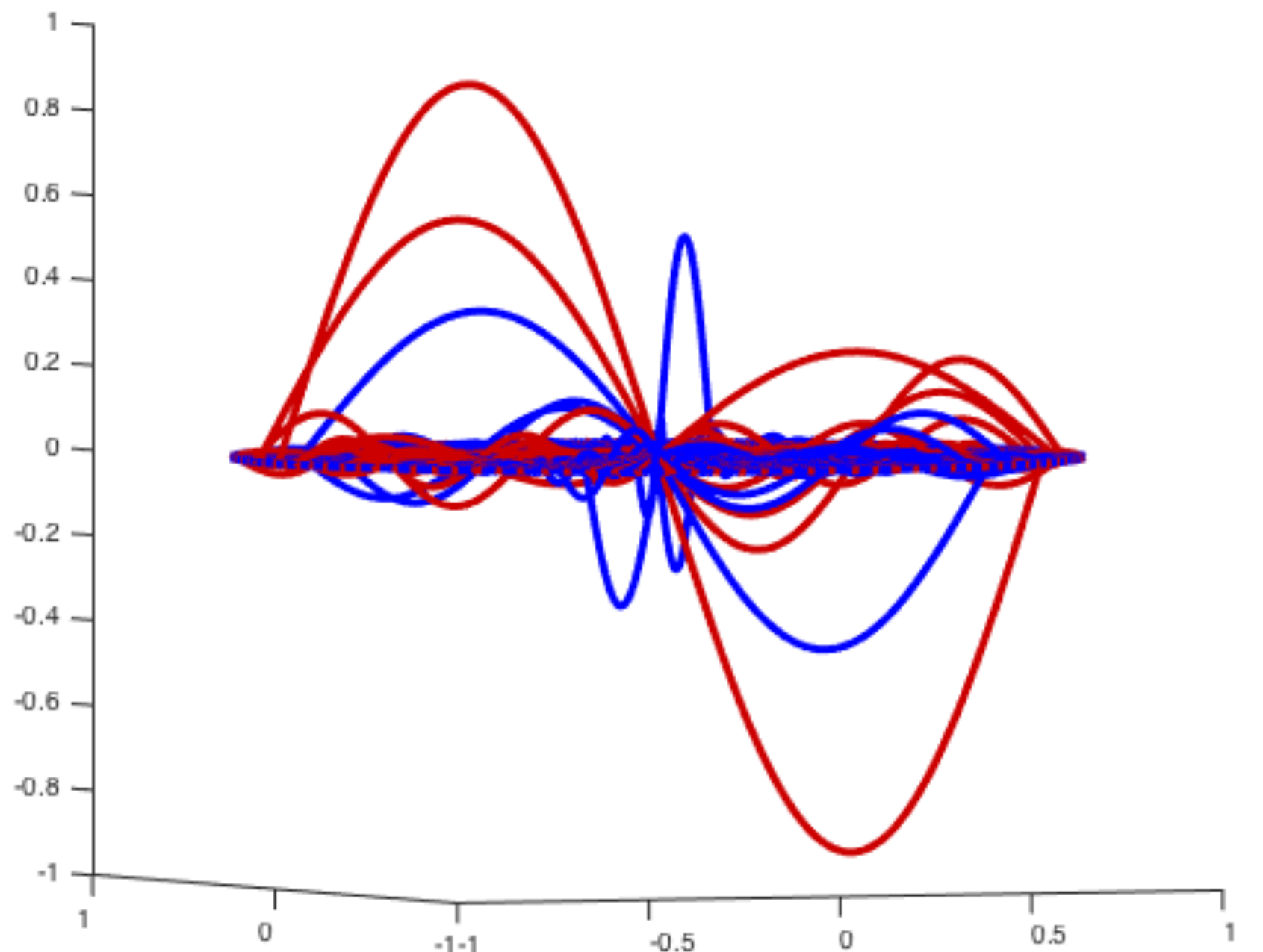} } }
        \end{subfigure} 
%
%
\caption{\textbf{Solutions of \textit{Example} \ref{Ex. Unbounded Frequency}}. The  Diffusion coefficient $K_{d}$, see \eqref{Def Deterministic Experimental Difussion Coefficient}. The solutions depicted in figures (a) and (b) on the edges $v_{\ell}v_{0}$ are colored with red if $K_{d}(v_{\ell}v_{0}) = 1$ (i.e., $\ell \equiv 0 \mod 3$), or blue if $K_{d}(v_{\ell}v_{0}) = 2$ (i.e., $\ell \not\equiv 0 \mod 3$). Forcing term $F:\Omega_{G}\rightarrow \R$ see \eqref{Eq Unbounded Frequency Forcing Terms}.  \label{Fig Graphs Fifth Example} }
\end{figure}

It follows that this system has more than one internal equilibrium. Consequently, an upscaled model of a system such as this, should contain uncertainty which, in this specific case, remains bounded due to the properties of the forcing term $F$.  
\end{example}
%
%
%
\subsection{Closing Observations}\label{Sec Numerical Closing Observations}
%
%
\begin{enumerate}[(i)]
\item 
The authors tried to find experimentally a rate of convergence using the well-know estimate 
\begin{align*}
& \alpha_{i} \sim \frac{\log \Vert  \p^{ \, n+1}_{i}- \p^{ \, n }_{i}  \Vert - 
\log   \Vert  \p^{ \, n}_{i} - \p^{ \, n-1}_{i} \Vert }{
\log   \Vert  \p^{ \, n}_{i} - \p^{ \, n-1}_{i} \Vert
- \log \Vert  \p^{ \, n-1}_{i}- \p^{ \, n -2}_{i}  \Vert } \, ,&
& i = 1, 2.
\end{align*}
The sampling was made on the intervals $n-5\leq j \leq n + 5$, for $n = 10, 20, 100, 500$ and $1000$. Experiments were run on all the examples except for \textit{Example} \ref{Ex. Unbounded Frequency}. In none of the cases, solid numerical evidence was detected that could suggest an order of convergence for the phenomenon. 

\item Experiments for random variations of the examples above were also executed, under the hypothesis that random variables were subject to the Law of Large Numbers. Convergence, slower than its corresponding deterministic version was observed, as expected. This is important for its applicability to upscaling networks derived from game theory, see \cite{Jackson}.  

\end{enumerate}
%
%
%
%
%
%
%
\section{Conclusions and Final Discussion}\label{Conclusions and Final Discussion}
%
%
The present work yields several accomplishments and also limitations as we point out below.
\begin{enumerate}[(i)]
\item The method presented in this paper can be easily extended to general scale-free networks in a very simple way. First identify the communication kernel (see \cite{Kim}). Second, for each node in the kernel, replace its numerous incident low-degree nodes by the upscaled nodes together with the homogenized diffusion coefficients and forcing terms, see \textit{Figure} \ref{Fig Networks}.

\item  The particular scale-free network treated in the paper i.e., the star metric graph, arises naturally in some important examples. These come from the theory of the strategic network formation, where the agents choose their connections following utilitarian behavior. Under certain conditions for the benefit-cost relation affecting the actors when establishing links with other agents, the asymptotic network is star-shaped (see \cite{Jacksonbloch}).

\item The scale-free networks are frequent in many real world examples as already mentioned. It follows that the method is applicable to a wide range of cases. However, important networks can not be treated the same way for homogenization, even if they share some important properties of communication. The small-world networks constitute an example since they are highly clustered, this feature contradicts the power-law degree distribution hypothesis. See \cite{Jacksonrogers} for a detailed exposition on the matter.

\item The upscaling of the diffusion phenomenon is done in a hybrid fashion. On one hand, the diffusion on the low-degree nodes is modeled by the weak variational form of the differential operators defined over the graph, but ignoring its combinatorial structure. On the other hand, the diffusion on the communication kernel will still depend on both, the differential operators and the combinatorial structure. This is an important achievement, because it is consistent with the nature of available data for the analysis of real world networks. Typically, the data for central (or highly connected) agents are more reliable than data for marginal (or low degree) agents. 

\item The central Ces\`aro convergence hypotheses for data behavior (stated  in \textit{Lemma} \ref{Th Estimates on v_0 = 0}-(iii), as well as those contained in \textit{Hypothesis} \ref{Hyp Forcing Term Cesaro Means Assumptions}, in order to conclude convergence have probabilistic-statistical nature. This is one of the main accomplishments of the work, because the hypotheses are mild and adjust to realistic scenarios; unlike strong hypotheses of topological nature such as periodicity, continuity, differentiability or even Riemann-integrability of the forcing terms (see \cite{Hornung}). This fact is further illustrated in \textit{Example} \ref{Ex. non-Riemann Integrable Forcing Terms}, where good asymptotic behavior is observed for a forcing term which is nowhere continuous on the domain $\Omega_{G}$ of analysis.

\item An important and desirable consequence of the data hypotheses adopted, is that the method can be extended to more general scenarios, as mentioned in \textit{Remark} \ref{Rem Probabilistic Flexibilities}, reported in \textit{Subsection} \ref{Sec Numerical Closing Observations} and illustrated in \textit{Examples} \ref{Ex. Probabilistic Flexibilities}, \ref{Ex. Unbounded Example} and \ref{Ex. Unbounded Frequency}. Moreover, \textit{Example} \ref{Ex. Unbounded Frequency} suggests a probabilistic upscaled model for the communication kernel, to be explored in future work.   

\item A different line of future research consists in the analysis of the same phenomenon, but using the mixed-mixed variational formulation introduced in \cite{MoralesShow2} instead of the direct one used in the present analysis. The key motivation in doing so, is that the mixed-mixed formulation is capable of modeling more general exchange conditions than those handled by the direct variational formulation and by the classic mixed formulations. This advantage can broaden in a significant way the spectrum of real-world networks which can be successfully modeled and upscaled.

\item Finally, the preexistent literature typically analyses the asymptotic behavior of diffusion in complex networks, starting from fully discrete models (e.g., \cite{JacksonYariv, JacksonLopez}). The pseudo-discrete treatment that we have followed here, constitutes more a complementary than an alternative approach. Depending on the availability of data and/or sampling, as well as the scale of interest for a particular problem, it is natural to consider a ``blending" of both techniques.

\end{enumerate}
\section*{Acknowledgements}
The authors wish to acknowledge Universidad Nacional de Colombia, Sede Medell\'in for its support in this work through the project HERMES 27798. The authors also wish to thank Professor Ma\l{}gorzata Peszy\'nska from Oregon State University, for authorizing the use of code \textbf{fem1d.m} \cite{PeszynskaFEM} in the implementation of the numerical experiments presented in \textit{Section} \ref{Sec Numerical Experiments}. It is a tool of remarkable quality, efficiency and versatility that has proved to be a decisive element in production and shaping of this work. 
%
%
%
%

%
%
\end{document}